\documentclass{amsart}
\usepackage{amsfonts,amssymb,amscd,amsmath,enumerate,verbatim,calc}
\usepackage{amsthm}

\newcommand{\ff}{\text{if and only if}}

\newcommand{\m}{\mathfrak{m} }
\newcommand{\M}{\mathfrak{M} }

\newcommand{\bbZ}{\mathbb{Z} }
\newcommand{\bbN}{\mathbb{N} }
\newcommand{\calR}{\mathcal{R} }
\newcommand{\calO}{\mathcal{O} }

\newcommand{\ord}{\operatorname{ord}}
\newcommand{\Div}{\operatorname{div}}

\newcommand{\depth}{\operatorname{depth}}

\newcommand{\Spec}{\operatorname{Spec}}
\newcommand{\Soc}{\operatorname{Soc}}

\newcommand{\LL}{\ell\ell}

\theoremstyle{plain}

\newtheorem{thm}{Theorem}[section]
\newtheorem{cor}[thm]{Corollary}
\newtheorem{lemma}[thm]{Lemma}
\newtheorem{prop}[thm]{Proposition}
\newtheorem{quest}[thm]{Question}
\newtheorem{conj}[thm]{Conjecture}

\theoremstyle{definition}
\newtheorem{defn}[thm]{Definition}
\newtheorem{rem}[thm]{Remark}
\newtheorem{exam}[thm]{Example}
\newtheorem{ac}{Acknowledgment}

\makeatletter
    
    \@addtoreset{equation}{section}
  \makeatother
%%%%%%%%%%%%%%%%%%%%%%%%%%%%%%%%%%%%%
\begin{document}
\title[The strong Rees property of powers of the maximal ideal]{The strong Rees property of powers of 
the maximal ideal and Takahashi-Dao's question}
\date{\today}
\author{Tony J.Puthenpurakal,  Kei-ichi Watanabe, 
Ken-ichi Yoshida}

\address{Tony J.Puthenpurakal,  Department of Mathematics, 
IIT Bombay, Powai, Mumbai 400076, India \\
email: tputhen@gmail.com}
%% Information for second author
\address{Kei-ichi Watanabe,  Department of Mathematics, 
College of Humanity and Sciences, Nihon University, 
Setagaya-ku, Tokyo, 156-8550, Japan\\
email: watanabe@math.chs.nihon-u.ac.jp}

%%%%%%%%%%%%%%%%%%%%%
%% Information for third author
%%%%%%% %%%%%%% %%%%%%% %%%%%%% %%%%%%% %%%%%%% %%%%%%%
\address{Ken-ichi Yoshida, 
 Department of Mathematics,
College of Humanities and Sciences,
Nihon University, Setagaya-ku, Tokyo, 156-8550, Japan\\
email: yoshida@math.chs.nihon-u.ac.jp}
\subjclass[2000]{Primary 13A30; 
Secondary 13H15, 13B22, 14B05}
\dedicatory{Dedicated to Craig Huneke on the occasion of 
his 65th Birthday.}

%\begin{dedication}
%Dedicated to Craig Huneke on the occasion of his 65th Birthday.
%\end{dedication}
\begin{abstract}
In this paper, we introduce the notion of the strong 
Rees property (SRP) for $\m$-primary ideals of a Noetherian local ring and
 prove that any power of the maximal ideal $\m$ has its property 
if the associated graded ring $G$ of $\m$ satisfies  
$\depth G \ge 2$. 
As its application, we characterize two-dimensional 
excellent normal local domains so that $\m$ is a $p_g$-ideal.  

Finally we ask what $\m$-primary ideals have SRP and state a conjecture which characterizes the case when $\m^n$ are the only ideals 
which have SRP.
\end{abstract}
\maketitle

\section{Introduction}
Let $(A,\m)$ be a Noetherian local ring with $d=\dim A \ge 1$ and $I$ an
$\m$-primary ideal of $A$.  
The notion of $\m$-full ideals was introduced by 
D. Rees and J. Watanabe  (\cite{JW}) and they proved the \lq\lq Rees property" 
for $\m$-full ideals, namely, if $I$ is $\m$-full ideal and $J$ is an 
ideal containing $I$, then $\mu(J)\le \mu(I)$, where $\mu(I)=\ell_A(I/\m I)$ 
is the minimal number of generators of $I$. Also, they proved that integrally closed 
ideals are $\m$-full if $A$ is normal. 

\par
Suppose $\depth A >0$. 
Then $\widetilde{\m^n}$, 
the Ratliff-Rush closure of $\m^n$, is $\m$-full (\cite[Proposition 2.2]{AP}). 
Thus $\m^n$ is $\m$-full for sufficiently large $n$. 
\par 
Sometimes we need stronger property for $\mu(I)$ and we will call it 
\lq\lq Strong Rees property" (SRP for short). 

\par \vspace{2mm} \par \noindent 
{\bf Definition (Strong Rees Property).}
Let $I$ be an $\m$-primary ideal of $A$. 
Then we say that $I$ satisfies the {\it strong Rees property} 
if for every ideal $J \supsetneq I$, we have $\mu(J) < \mu(I)$.

\par \vspace{2mm}
So it will be natural to ask the following questions.

\begin{quest}\label{Q_SRP} 
Let $(A,\m)$ be a normal local ring. 
Then 
\begin{enumerate}
\item[$(1)$] Does $\m^n$ has the strong Rees property for every $n\ge 1$?
\item[$(2)$] If $I$ has the strong Rees property, 
then is $I = \m^n$ for some $n \ge 1$?
\end{enumerate}
\end{quest}

\par
Actually, both questions are not true in general. 
But we can show that
(1) in the Question \ref{Q_SRP} holds under suitable mild condition.
Also, we will give an example of two-dimensional normal local rings 
where $\m^2$ does not satisfy the strong Rees property.

\par
As for (2) of Question \ref{Q_SRP}, we will 
discuss it in Section 6. 

\par \vspace{2mm}
Assume that $I$ is an $\m$-primary ideal. 
Then the \textit{multiplicity} 
(resp. \textit{the minimal number of system of generators}) 
of $I$ is denoted by $e(I)$ (resp. $\mu(I)$). 
The \textit{Loewy length} $\LL(I)$ is defined by 
\[
\LL(I)=\min\{r \in \mathbb{Z}_{\ge 0} \,|\,\m^r \subset I \}
\]
Notice that the notion of Loewy length of an Artinian ring 
measures the nilpotency of the maximal ideal. 
It is natural to ask if $e(I)$ is bounded by the product 
of $\mu(I)$ and $\LL(I)$ after adjusting some error terms.

\par 
The origin of this work was a discussion of second and 
third authors with Hailong Dao.  
He presented an inequality between $\mu(I)$, 
the multiplicity $e(I)$ of $I$ and Loewy length $\LL(I)$.

\par \vspace{2mm} \par \noindent
{\bf Question \ref{T-Dao} (\cite{DS}).}
Let $(A,\m)$ be a $d$-dimensional Cohen-Macaulay local ring. 
Let $I \subset A$ be an $\m$-primary $($integrally closed$)$ ideal.  
When does the inequality 
\[
(d-1)!(\mu(I)-d+1)\cdot \LL(I) \ge e(I)
\]
hold true?

\par
Dao and Smirnov \cite{DS} proved that the Question \ref{T-Dao} holds true 
if $A$ is a two-dimensional analytically unramified and 
the maximal ideal $\m$ is  a $p_g$-ideal (or, equivalently, 
the Rees algebra $\calR(\m)$ is normal and Cohen-Macaulay).

\par 
We are interested in the converse of the Question 
\ref{T-Dao} in the case of $d=2$. 
Here, since $e(I)$ does not change after taking integral closure, 
it is natural to assume that $I$ is integrally closed.
Namely, our Question is 

\begin{quest}\label{conv-Dao} 
Assume that $(A,\m)$ is a two-dimensional excellent normal local domain. 
If an inequality 
\[
(\mu(I)-1)\cdot \LL(I) \ge e(I) 
\] 
holds for any $\m$-primary integrally 
closed ideal $I$, then is $\m$ a $p_g$-ideal? 
\end{quest}

\par
It turns out that this question is related to the 
\lq\lq strong Rees property" of powers of the maximal ideal.
The main result in this paper is the following theorem.

\par \vspace{2mm} \par \noindent
{\bf Theorem \ref{Strong}.}
Let $(A,\m)$ be a Noetherian local ring. 
Assume that $\depth A \ge 2$ and 
$H_{\M}^1(G)$ has finite length, where $G=G(\m)= 
\oplus_{n\ge 0}\m^n/\m^{n+1}$.  
If $\m^{\ell}$ is Ratliff-Rush closed, then $\m^{\ell}$ has the strong Rees property. 

\par \vspace{2mm}
By Remark \ref{RR&G}, any power $\m^{\ell}$ 
is Ratliff-Rush closed whenever $\depth G \ge 1$. 
Hence we have the following corollary. 

\par \vspace{2mm} \par \noindent 
{\bf Corollary \ref{depth2}.}
If $\depth G \ge 2$, then $\m^{\ell}$ has the 
strong Rees property for every $\ell \ge 1$. 

\par 
In general, we cannot relax the assumption that 
$\depth G \ge 2$ even if $A$ is normal. 
See Section 4 for more details.

\par 
In Section 5, as an application of the theorem above, 
we prove the above question has an affirmative answer. 

\par \vspace{2mm} \par \noindent 
{\bf Theorem \ref{Dao}.}

Let $(A,\m)$ be a two-dimensional excellent normal local domain containing an algebraically closed field.
Then the following conditions are equivalent$:$
\begin{enumerate}
\item[$(1)$] $\m$ is a $p_g$-ideal. 
\item[$(2)$] For every $\m$-primary integrally closed ideal $I$, 
\begin{equation}
(\mu(I)-1) \cdot \ell\ell_A(I) \ge e(I) \tag{$*$}
\end{equation}
holds true. 
\item[$(3)$] For any power $I=\m^{\ell}$ of $\m$, 
the inequality $(*)$ holds true.   
\end{enumerate}

\par \vspace{2mm}
After proving this theorem, Dao and Smirnov informed 
us that they proved the same theorem independently.

%%%%%%%%%%%%%%%%%%%%%%%%%%%%%%%%%%%%%%%%%%
%%%%%%%%%%%%%%%%%%%%%%%%%%%%%%%%%%%%%%%%%%%
%%%%%%%%%%%%%%%%%%%%%%%%%%%%%%%%%%%%%%%%%%%%

\section{Preliminaries}
Throughout this paper, 
let $(A,\m)$ be a Noetherian local ring with $d=\dim A \ge 1$, 
and let $I,J \subset A$ be ideals of positive height. 
We recall the notion which we will need later. 

\subsection{$\m$-full ideals}

\begin{defn}[\textbf{$\m$-full, Rees  property}\textrm{\cite{JW}}] \label{m-full}
An ideal $I$ is called \textit{$\m$-full} if 
there exists an element $x \in \m$ so that 
$\m I \colon x = I$. 
\par 
An ideal $I$ is said to have \textit{the Rees property}  
if $\mu(J) \le \mu(I)$ for any ideal $J$ containing $I$.  
\end{defn}

\begin{prop}[\textrm{See \cite[Theorem 3]{JW}}] \label{ReesP}
Any $\m$-primary $\m$-full ideal has the Rees property. 
\end{prop}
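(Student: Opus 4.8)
The plan is to reduce the desired inequality $\mu(J) \le \mu(I)$ to a comparison of lengths and then to extract the crucial injectivity from the $\m$-full hypothesis. Since $I$ is $\m$-primary, so is every ideal $J \supseteq I$, and all the subquotients that appear below (namely $J/I$, $I/\m I$, and $\m J/\m I$) have finite length over $A$; I will use $\ell_A(-)$ freely together with additivity of length along filtrations. Note $\m I \subseteq \m J \subseteq J$ and $\m I \subseteq I \subseteq J$, so computing $\ell_A(J/\m I)$ along the two filtrations of $J/\m I$ gives
\[
\ell_A(J/\m J) + \ell_A(\m J/\m I) = \ell_A(J/I) + \ell_A(I/\m I).
\]
Recalling $\mu(I) = \ell_A(I/\m I)$ and $\mu(J) = \ell_A(J/\m J)$, this rearranges to
\[
\mu(J) = \mu(I) + \ell_A(J/I) - \ell_A(\m J/\m I),
\]
so the whole statement comes down to proving the single inequality $\ell_A(J/I) \le \ell_A(\m J/\m I)$.

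The key step, and the only place the hypothesis enters, is to build an injection $J/I \hookrightarrow \m J/\m I$. Fix $x \in \m$ with $\m I \colon x = I$, as furnished by $\m$-fullness. I claim that multiplication by $x$ induces a well-defined $A$-linear map $\varphi \colon J/I \to \m J/\m I$ sending $\bar a \mapsto \overline{xa}$. Indeed, for $a \in J$ one has $xa \in xJ \subseteq \m J$, and if $a \in I$ then $xa \in xI \subseteq \m I$, so $\varphi$ is well defined. Its injectivity is exactly the content of the $\m$-full condition: if $\varphi(\bar a) = 0$ then $xa \in \m I$, i.e. $a \in \m I \colon x = I$, forcing $\bar a = 0$ in $J/I$.

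The injectivity of $\varphi$ yields $\ell_A(J/I) \le \ell_A(\m J/\m I)$, and substituting this into the displayed identity above gives $\mu(J) \le \mu(I)$, as required. I expect the main (indeed essentially the only) substantive point to be the injectivity of $\varphi$, since it is precisely there that the defining colon equality $\m I \colon x = I$ is consumed in exactly the form needed; the surrounding length bookkeeping is routine, with finiteness of all the relevant lengths guaranteed by the $\m$-primary hypothesis.
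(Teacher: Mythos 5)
Your argument is correct and is precisely the standard proof that the paper delegates to \cite[Lemma 2.2]{G}: the length bookkeeping via the two filtrations of $J/\m I$ together with the injection $J/I \hookrightarrow \m J/\m I$ induced by multiplication by the element $x$ with $\m I \colon x = I$. This is also the same mechanism the paper itself uses later in Lemma \ref{Full}, so there is nothing to add.
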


\begin{proof}
See the proof of \cite[Lemma 2.2]{G}. 
\end{proof}

\par 
The following result is due to Rees in the case of normal integral domains. 

\begin{prop}[\textrm{See \cite[Theorem 5]{JW}}] \label{IntClos}
Any integrally closed ideal of positive height is $\m$-full. 
\end{prop}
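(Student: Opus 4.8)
The plan is to exhibit an explicit $x \in \m$ with $\m I \colon x = I$ by means of the valuative description of integral closure. Since the inclusion $I \subseteq \m I \colon x$ is automatic for every $x \in \m$, the entire content is to produce an $x$ for which $\m I \colon x \subseteq I$. First I would reduce to the case where the residue field $A/\m$ is infinite, by replacing $A$ with the faithfully flat local extension $A(X) = A[X]_{\m A[X]}$; this is legitimate because integral closedness is preserved ($\ov{IA(X)} = \ov{I}\,A(X)$) and $\m$-fullness is preserved and reflected along $A \to A(X)$, a standard fact in the theory of $\m$-full ideals.

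Next I would bring in the Rees valuations $v_1,\dots,v_s$ of $I$, the finitely many discrete valuations attached to the normalization of the Rees algebra of $I$, which govern integral closure through the criterion
\[
\ov{I} = \{\, a \in A : v_i(a) \ge v_i(I) \text{ for all } i=1,\dots,s \,\}, \qquad v_i(I) := \min_{y \in I} v_i(y).
\]
These exist over an arbitrary Noetherian ring, and the hypothesis $\operatorname{ht} I > 0$ guarantees $v_i(I) < \infty$ for every $i$, since then $I$ lies in no minimal prime. For each $i$ set $r_i := v_i(\m) = \min_{z \in \m} v_i(z)$, which is finite because $I \subseteq \m$ forces $r_i \le v_i(I)$.

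The heart of the argument is the choice of $x$: I claim a general element $x \in \m$ satisfies $v_i(x) = r_i$ for all $i$ simultaneously. Indeed, for each fixed $i$ the locus $\{x : v_i(x) > r_i\}$ meets $\m/\m^2$ in a proper $A/\m$-subspace, so, the residue field being infinite and the index set finite, one can select $x$ avoiding the union of these finitely many subspaces. This is the only place the infinite residue field is used, and it is the step I expect to be the main obstacle: writing $x = \sum_j \lambda_j z_j$ for minimal generators $z_j$ of $\m$ and verifying that the exceptional coefficient-loci sit inside proper subvarieties requires examining the leading forms of $x$ in each valuation ring $V_i$.

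Finally I would verify $\m I \colon x \subseteq I$. Suppose $a x \in \m I$. Each $v_i$ is multiplicative, so $v_i(\m I) = v_i(\m) + v_i(I) = r_i + v_i(I)$, and therefore
\[
v_i(a) + r_i = v_i(a x) \ge v_i(\m I) = r_i + v_i(I),
\]
which gives $v_i(a) \ge v_i(I)$ for every $i$. By the valuative criterion $a \in \ov{I}$, and since $I$ is integrally closed this forces $a \in I$. Hence $\m I \colon x = I$, so $I$ is $\m$-full; once the genericity of $x$ is secured, every other step is formal.
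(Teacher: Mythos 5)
Your argument is, in essence, the proof behind the paper's citation (the paper only refers to Goto's Theorem 2.4, whose proof goes back to Rees and J.~Watanabe): reduce to an infinite residue field, choose $x\in\m$ general so that $v_i(x)=v_i(\m)$ for every Rees valuation $v_i$ of $I$, and deduce $\m I\colon x\subseteq\ov{I}=I$ from the valuative criterion; the genericity step you flag as the main obstacle is in fact unproblematic, since each bad locus $\{x\in\m : v_i(x)>v_i(\m)\}$ is an ideal properly contained in $\m$ and hence has proper image in $\m/\m^2$ by Nakayama. The one assertion you should not wave through as ``standard'' is that $\m$-fullness is \emph{reflected} along $A\to A(X)$ --- the witness $\xi\in\m A(X)$ produced upstairs need not descend to an element of $\m$ --- but the cited sources simply assume the residue field infinite from the outset, so this caveat does not separate your proof from the one the paper invokes.
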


\begin{proof}
See the proof of Theorem \cite[Theorem 2.4]{G}.
\end{proof}

\begin{defn}[\textbf{Ratliff-Rush closure}  \textrm{\cite{RR}}] \label{RRclos}
The \textit{Ratliff-Rush closure} of $I$ is defined by 
\[
\widetilde{I} = \bigcup_{n \ge 0} I^{n+1} \colon I^n.
\] 
The ideal $I$ is called \textit{Ratliff-Rush closed} 
if $\widetilde{I}=I$. 
\end{defn}

\begin{rem}\label{RR&G}
Note that $I \subset \widetilde{I} \subset \overline{I}$, 
where $\overline{I}$ is the integrally closure of $I$. 
If we put $G=G(\m)=\bigoplus_{n \ge 0} \m^n/\m^{n+1}$, then 
\[
H_{\M}^0(G) = \bigoplus_{n \ge 0} (\widetilde{\m^{n+1}}\cap \m^{n})/\m^{n+1}.
\]
\end{rem}

\begin{lemma} Assume that for some $z\in \m^{n-1}\setminus
\m^n$ we have $z\m\subset \m^{n-1}$.  Then, putting 
$I= (x) + \m^n$, $\mu(I) > \mu(\m^n)$, hence $\m^n$ 
does not have Rees property. 
\end{lemma}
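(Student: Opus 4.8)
The plan is to disprove the Rees property directly, by taking the witnessing overideal to be $I=(z)+\m^n$ itself. Since $z\in\m^{n-1}\setminus\m^n$ we have $z\notin\m^n$, so $I\supsetneq\m^n$ is a \emph{proper} containment, and by Definition~\ref{m-full} it suffices to produce the single inequality $\mu(I)>\mu(\m^n)$.

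First I would compute the colength $\mu(I)=\ell_A(I/\m I)$. Here $\m I=z\m+\m^{n+1}$, and the whole argument hinges on the hypothesis on $z$, which gives $z\m\subseteq\m^{n+1}$ and hence $\m I=\m^{n+1}$. Granting this collapse, $\mu(I)=\ell_A(I/\m^{n+1})$, and I would split this along the chain $\m^{n+1}\subseteq\m^n\subseteq I$ into $\ell_A(I/\m^n)+\ell_A(\m^n/\m^{n+1})$. The second summand is by definition $\mu(\m^n)$. For the first, $I/\m^n$ is cyclic on the image $\bar z$ of $z$; it is nonzero because $z\notin\m^n$, and it is killed by $\m$ because $z\m\subseteq\m^{n+1}\subseteq\m^n$, so $I/\m^n\cong A/\m$ has length $1$. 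This yields $\mu(I)=\mu(\m^n)+1>\mu(\m^n)$, as desired.

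The one delicate point, and the step carrying all the weight, is the collapse $\m I=\m^{n+1}$: it is exactly the condition that multiplying the new generator $z$ into $\m$ creates nothing new modulo $\m^{n+1}$. Without it, $z\m$ could enlarge $\m I$ beyond $\m^{n+1}$ and thereby lower $\mu(I)=\ell_A(I/\m I)$, so the inequality could fail. Phrased in the associated graded ring $G$, the hypothesis says the initial form $z^{*}\in G_{n-1}$ is a nonzero element annihilated by $G_1$, i.e.\ a socle element forcing $\depth G=0$; so the lemma is precisely the obstruction showing that the depth hypothesis in Theorem~\ref{Strong} and Corollary~\ref{depth2} cannot simply be dropped.
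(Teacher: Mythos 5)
Your proof is correct and is essentially the paper's own argument: the paper asserts that $z$ together with a minimal generating set of $\m^n$ minimally generates $I$, which is exactly your computation $\mu(I)=\ell_A(I/\m I)=\ell_A(I/\m^n)+\ell_A(\m^n/\m^{n+1})=\mu(\m^n)+1$ once one knows $\m I=\m^{n+1}$. Note that, like the paper's one-line proof, yours silently reads the hypothesis as $z\m\subset\m^{n+1}$ (the printed $\m^{n-1}$ must be a typo, since $z\in\m^{n-1}$ already forces $z\m\subset\m^n\subset\m^{n-1}$, and $(x)$ should be $(z)$); with that reading, which is the one used in Proposition~\ref{Rees}, your argument is complete and correctly identifies the collapse $\m I=\m^{n+1}$ as the step carrying all the weight.
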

\begin{proof} Let $\{y_1,\ldots, y_{\mu}\}$ be a minimal set 
of generators of $\m^n$.  Then we can see that 
$\{z, y_1,\ldots, y_{\mu}\}$ is a minimal set of generators of 
$I$.  Hence $\mu(I) = \mu(\m^n)+1$.
\end{proof}

\par
Any $\m$-full ideal has the Rees property. 
However, in order to prove our theorem, we need 
stronger inequalities.  

\par 
Recall the definition of the strong Rees property. 
\begin{defn} \label{SRP-defn}
An $\m$-primary ideal $I$ is said to have the  
\textit{strong Rees property} (SRP for short) if  
$\mu(J) < \mu(I)$ holds true for every ideal $J$ with 
$I \subsetneq J$.
\end{defn}

\par 
We can show the existence of $\m$-primary ideals with SRP 
by the following Lemma.

\begin{lemma} \label{ExSRP}
Let $(A,\m)$ be a Noetherian local ring. 
Fix an $\m$-primary ideal $I$ with $\mu(I)=n$.
\begin{enumerate}
\item Any maximal element of the set
 of $\m$-primary ideals 
\[\mathcal{I}= \{ J \subset A \;|\; J\supset I \;\text {and}\;  \mu(J) \ge n\} \]
 has the strong Rees property.
\item If, moreover, $I$ is $\m$-full, then there is   
 an ideal $I'\supset I$ with the strong Rees property and $\mu(I')=\mu(I)$.
\end{enumerate}
\end{lemma}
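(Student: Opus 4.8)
The plan is essentially formal: use the ascending chain condition to guarantee that $\mathcal{I}$ has maximal elements, and exploit the fact that the defining threshold of $\mathcal{I}$ is the \emph{inequality} $\mu(J) \ge n$ (not the equality $\mu(J)=n$) to convert maximality into the strict drop of $\mu$ demanded by SRP.

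For (1), I would first record that $\mathcal{I} \ne \varnothing$, since $I$ itself lies in $\mathcal{I}$, and that $A$ is Noetherian, so $\mathcal{I}$ possesses maximal elements. Fix any maximal element $J$, and note that every member of $\mathcal{I}$ is an $\m$-primary (in particular proper) ideal, being an ideal that contains the $\m$-primary ideal $I$. Now take any ideal $K$ with $K \supsetneq J$. Since $K \supset J \supset I$, the ideal $K$ still contains $I$; if $K$ is proper it is $\m$-primary, and maximality of $J$ in $\mathcal{I}$ forces $K \notin \mathcal{I}$, which can only mean $\mu(K) < n$. As $\mu(J) \ge n$ because $J \in \mathcal{I}$, we obtain $\mu(K) < n \le \mu(J)$, that is, $\mu(K) < \mu(J)$, which is exactly the SRP inequality. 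Hence $J$ has the strong Rees property.

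For (2), the extra hypothesis that $I$ is $\m$-full feeds into Proposition \ref{ReesP}: an $\m$-primary $\m$-full ideal has the Rees property, so $\mu(K) \le \mu(I) = n$ for every ideal $K \supset I$. Consequently every member of $\mathcal{I}$ has $\mu$ equal to $n$, and $\mathcal{I} = \{\, J \supset I \mid J \text{ $\m$-primary},\ \mu(J)=n \,\}$. Choosing $I'$ to be a maximal element of this $\mathcal{I}$, part (1) shows that $I'$ has SRP, while $I' \in \mathcal{I}$ gives simultaneously $I' \supset I$ and $\mu(I') = n = \mu(I)$, which is precisely the assertion.

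I do not expect a deep obstacle; the argument is a clean combination of the ACC and the Rees property. The one place that genuinely needs care is the design of $\mathcal{I}$ with the threshold \lq\lq$\mu(J)\ge n$\rq\rq: it is this inequality that forces $\mu$ to fall \emph{strictly} below $\mu(J)$ upon enlarging $J$, whereas the equality version $\mu(J)=n$ would not let maximality rule out $\mu(K) > n$ for $K \supsetneq J$. I would also verify the degenerate boundary case $K=A$, where $\mu(A)=1$: the inequality $\mu(A) < \mu(J)$ holds as soon as $\mu(J) \ge 2$, which is automatic whenever $I$ is non-principal, the only case in which SRP is not vacuous. Finally, the existence of maximal elements used in (2) is the same ACC argument as in (1).
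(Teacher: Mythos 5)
Your argument is correct and is essentially the paper's own proof: the authors declare part (1) ``obvious by the definition of $\mathcal{I}$'' (precisely your ACC-plus-maximality observation that any strict enlargement of a maximal element must drop below the threshold $n$), and for part (2) they likewise take a maximal element and use the Rees property of the $\m$-full ideal $I$ to force $\mu(I')=\mu(I)$. You have only supplied the routine details (nonemptiness of $\mathcal{I}$, existence of maximal elements, and the boundary case $K=A$) that the paper leaves implicit.
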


\begin{proof} (1) is obvious by the definition of $\mathcal{I}$. 
\par
(2) If $I'\supset I$  has SRP and $\mu(I')\ge \mu(I)$,  
we should have $\mu(I') = \mu(I)$ since $I$ is $\m$-full.  
\end{proof}

The next example gives a motivation for us to 
study the strong Rees property of powers 
of the maximal ideal.  
See also \cite[Theorem 5]{JW} and Section 6. 

\begin{exam}
\label{RLR-SRP}
Assume that $A$ is a two-dimensional regular local ring. 
Then for any $\m$-primary ideal $I$, 
the following conditions are equivalent. 
\begin{enumerate}
\item $I$ has the strong Rees property. 
\item $I=\m^n$ for some integer $n \ge 1$. 
\end{enumerate}
Indeed, assume that $I$ has the strong Rees property.  
If we put $n=\ord(I)=\max\{n \in \bbZ\,|\, I \subset \m^n \}$, then we can 
take an element $f \in I$ so that $\ord(f)=n$. 
Then since $I/(f)$ is an $\m/(f)$-primary ideal of 
$A/(f)$, we have $\mu(I/(f))\le n=e(\m/(f))$ 
because $A/(f)$ is a one-dimensional Cohen-Macaulay local ring. 
In particular, $\mu(I) \le n+1 = \mu(\m^{n})$. 
If $I \ne \m^{n}$, then $n+1=\mu(\m^n) < \mu(I)$ by
 the assumption (1). 
But this is a contradiction.   
\par 
Conversely, if $I\supsetneq \m^n$, then $\ord(I)<n$ and 
$\mu(I) \le \ord(I) + 1 < n+1=\mu(\m^n)$.
\end{exam}

\par \vspace{2mm}
\subsection{$p_g$-ideals}
In what follows, let $(A,\m)$ be a 
two-dimensional excellent normal local domain 
containing an algebraically closed field $k=A/\m$. 
Let $f \colon X \to \Spec A$ be a resolution of 
singularities. 
Then $p_g(A)=\dim_k H^1(\calO_X)$ is called  
the \textit{geometric genus} of $A$. 
Note that it does not depend on the choice of 
resolution of singularities.

\par \vspace{2mm}
Let $I \subset A$ be an $\m$-primary integrally closed 
ideal. 
Let $f \colon X \to \Spec A$ be a resolution of 
singularities on which $I$ is represented, that is, 
$I\calO_X$ is invertible and $I\calO_X=\calO_X(-Z)$ for 
some anti-nef cycle $Z$ on $X$. 
\par 
Okuma and the last two authors \cite{OWY1} proved that 
$\dim_k H^0(\calO_X(-Z)) \le p_g(A)$ holds true 
if $H^0(\calO_X(-Z))$ has no fixed component.

\begin{defn}[\textrm{See \cite{OWY1}}] \label{PG-def}
An anti-nef cycle $Z$ is a $p_g$-cycle if $\calO_X(-Z)$ is generated and 
$\dim_k H^0(\calO_X(-Z))=p_g(A)$.  
An $\m$-primary integrally closed ideal $I$ is called a \textit{$p_g$-ideal} 
if $I$ is represented 
as $I=H^0( \calO_X(-Z))$ by a $p_g$-cycle $Z$. 
\end{defn}

The following theorem gives a characterization 
of $p_g$-ideals in terms of Rees algebras.

\begin{prop}[See \cite{OWY2}] \label{pg-chara}
Let $(A,\m)$ be a two-dimensional excellent normal local domain 
containing an algebraically closed field. 
Let $I \subset A$ be an $\m$-primary ideal. 
Then the following conditions are equivalent$:$
\begin{enumerate}
\item[$(1)$] $I$ is a $p_g$-ideal. 
\item[$(2)$] $\overline{I^n}=I^n$ for every $n \ge 1$ and $I^2=QI$ 
for some minimal reduction $Q \subset I$. 
\item[$(3)$] The Rees algebra $\calR(I)=A[It] (\subset A[t])$ is a Cohen-Macaulay normal domain, where 
$t$ is an indeterminate over $A$.  
\end{enumerate} 
\end{prop}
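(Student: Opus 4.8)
The plan is to separate the purely ring-theoretic equivalence $(2)\Leftrightarrow(3)$ from the geometric content linking these to $(1)$, and to handle the former first. For $(2)\Leftrightarrow(3)$ I would match the two clauses one-for-one. Since $A$ is a normal domain, $A[t]$ is normal and the integral closure of $\calR(I)$ inside $A[t]$ is $\bigoplus_{n\ge 0}\overline{I^n}t^n$; hence $\calR(I)$ is a normal domain if and only if $\overline{I^n}=I^n$ for all $n\ge 1$. Next, as $A$ is two-dimensional and Cohen-Macaulay, I would invoke the Goto-Shimoda criterion: $\calR(I)$ is Cohen-Macaulay if and only if $G(I)$ is Cohen-Macaulay and the reduction number satisfies $r(I)\le d-1=1$. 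The key point is that the single condition $I^2=QI$ already supplies both clauses: from $I^2=QI$ one gets $I^n=Q^{n-1}I$ for $n\ge 1$, so the Valabrega-Valla equalities $Q\cap I^n=QI^{n-1}$ hold and $G(I)$ is automatically Cohen-Macaulay. Thus condition $(3)$ collapses exactly to condition $(2)$.

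For the link with $(1)$ I would transport everything to a resolution $f\colon X\to\Spec A$ on which $I$ is represented, writing $I\calO_X=\calO_X(-Z)=:L$ with $Z$ anti-nef, so that $\overline{I^n}=H^0(\calO_X(-nZ))=H^0(L^n)$ and $e(I)=-Z^2$. The controlling invariant is $q(nZ):=\dim_k H^1(X,\calO_X(-nZ))$; by the OWY1 bound (Grauert-Riemenschneider duality turning their $H^0$-statement into $q(nZ)\le p_g(A)$), the condition that $I$ be a $p_g$-ideal is that $L$ be globally generated and $q(Z)=p_g(A)$. Assuming $(1)$, I would first note that the positivity of $Z$ gives $q(nZ)=p_g(A)$ for all $n\ge 1$. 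Because $L$ is generated with no accompanying drop in $H^1$, a Castelnuovo-Mumford regularity argument makes the multiplication maps $H^0(L)\otimes H^0(L^n)\to H^0(L^{n+1})$ surjective, so inductively $I^{n+1}=I\cdot\overline{I^n}=\overline{I^{n+1}}$, i.e. $\overline{I^n}=I^n$. Finally, choosing a general two-element minimal reduction $Q=(a,b)$ — a base-point-free pencil in $|{-Z}|$, available since $k$ is algebraically closed — the base-point-free pencil trick yields $aH^0(L)+bH^0(L)=H^0(L^2)$, hence $I^2=\overline{I^2}=QI$. This proves $(1)\Rightarrow(2)$.

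For $(2)\Rightarrow(1)$ I would run the cohomology in reverse. Normality $\overline{I^n}=I^n$ says the section ring $\bigoplus_n H^0(L^n)$ is generated in degree one, which forces $L=\calO_X(-Z)$ to be generated, so it remains only to recover $q(Z)=p_g(A)$ from $I^2=QI$. Here I would compare two expressions for $\ell_A(A/I^{n+1})$: on one side the Hilbert polynomial, which under $r(I)\le 1$ is pinned down by $e_0=e(I)=-Z^2$ and $e_1$; on the other side the Riemann-Roch computation on $X$ arising from $0\to\calO_X(-nZ)\to\calO_X\to\calO_{nZ}\to 0$, whose error term is precisely $p_g(A)-q(nZ)$. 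Matching the two and using $q(nZ)\le p_g(A)$ forces $q(nZ)=p_g(A)$ for all $n\ge 1$, in particular $q(Z)=p_g(A)$, which together with $L$ generated is exactly $(1)$. Alternatively one may route $(3)\Leftrightarrow(1)$ directly through the Sancho de Salas exact sequence, which identifies the local cohomology $H^\bullet_{\M}(\calR(I))$ with $\bigoplus_n H^\bullet(X,\calO_X(-nZ))$ and reads the Cohen-Macaulayness of the three-dimensional ring $\calR(I)$ as the constancy $q(nZ)\equiv p_g(A)$ for $n\ge 1$.

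I expect the genuine difficulty to lie entirely on the geometric side rather than in $(2)\Leftrightarrow(3)$. The delicate points are (i) establishing the inequality $q(Z)\le p_g(A)$ and its equality case cleanly on a non-proper resolution, where the OWY1 input and Grauert-Riemenschneider duality carry the weight, and (ii) the two surjectivity statements in $(1)\Rightarrow(2)$ — Castelnuovo-Mumford regularity for normality and the base-point-free pencil trick for $I^2=QI$ — which must be justified for the higher-direct-image cohomology of $L=\calO_X(-Z)$ rather than on a projective variety. The converse Riemann-Roch bookkeeping in $(2)\Rightarrow(1)$ is the other error-prone place; organizing it through the Sancho de Salas sequence is probably the safest route, since it makes the Cohen-Macaulayness of $\calR(I)$ and the cohomological constancy $q(nZ)=p_g(A)$ transparently equivalent.
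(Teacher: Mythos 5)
The paper does not prove this proposition; it is quoted from \cite{OWY2}, so the only thing to compare your attempt with is that source. Your overall architecture matches it. The equivalence $(2)\Leftrightarrow(3)$ as you argue it is complete and is the standard route: since $A$ is normal, $A[t]$ is normal and the integral closure of $\calR(I)$ (in its fraction field, hence in $A[t]$) is $\bigoplus_{n\ge0}\overline{I^n}t^n$, so normality of $\calR(I)$ is exactly $\overline{I^n}=I^n$ for all $n$; and $A$ is Cohen--Macaulay by Serre's criterion with infinite residue field, so Goto--Shimoda/Trung--Ikeda together with Valabrega--Valla convert Cohen--Macaulayness of $\calR(I)$ into $I^2=QI$, the single condition supplying both $r(I)\le 1$ and the Cohen--Macaulayness of $G(I)$.

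Two caveats on the geometric half. First, the displayed definition in this paper contains a typo that you correctly route around: the $p_g$-cycle condition is $\dim_k H^1(X,\calO_X(-Z))=p_g(A)$, not $H^0$; your reading via $q(Z):=h^1(\calO_X(-Z))\le p_g(A)$ is the intended one. Second, the step you describe as ``a Castelnuovo--Mumford regularity argument makes $H^0(L)\otimes H^0(L^n)\to H^0(L^{n+1})$ surjective'' is not formal and is in fact the entire content of $(1)\Rightarrow(2)$: $X$ is not projective, $L=\calO_X(-Z)$ is only anti-nef and generated, and the surjectivity fails for general integrally closed $I$ (otherwise every such ideal would be stable and normal). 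The mechanism in \cite{OWY1}, which you do invoke for $I^2=QI$, is the Koszul complex of two general sections of $\calO_X(-Z)$, whose $H^1$-obstruction vanishes precisely because $h^1(\calO_X(-Z))$ attains the maximal value $p_g(A)=h^1(\calO_X)$; the same lemma simultaneously yields $q(nZ)=p_g(A)$ for all $n$, $\overline{I^{n+1}}=I\,\overline{I^n}$, and $\overline{I^2}=QI$, so you should fold your two surjectivity claims into that one statement rather than treat them as separate formalities. For $(2)\Rightarrow(1)$, your second proposed route --- the Sancho de Salas identification of $H^i_{\M}(\calR(I))$ with $\bigoplus_n H^i(X,\calO_X(-nZ))$ in the relevant range, reading the constancy $q(nZ)\equiv p_g(A)$ off from Cohen--Macaulayness of $\calR(I)$ --- is the one actually used in \cite{OWY2} and is preferable to the Riemann--Roch bookkeeping, which requires an additional length formula you have not stated.
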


For instance, any integrally closed $\m$-primary ideal 
in a two-dimensional rational singularity (i.e. $p_g(A)=0$) 
is a $p_g$-ideal. 
On the other hand, any two-dimensional excellent 
normal local domain containing algebraically closed field 
has a $p_g$-ideal; see \cite{OWY1,OWY2}.

%%%%%%%%%%%%%%%%%%%%%%%%%%%%%%%%%%%%%%%%%%%%%%
\section{Strong Rees Property of powers of the maximal ideal}

\par \vspace{2mm}
In what follows, let $(A,\m)$ be a Noetherian local ring and set $\mathcal{R}= \mathcal{R}(\m)$ and $G=G(\m)$ 
and $\mathfrak{M}=\m \mathcal{R}+\mathcal{R}_{+}$. 
The main purpose of this section is to consider the following question.

\begin{quest} \label{Motivation}
Assume that $I$ is Ratliff-Rush closed.  
When does $I$ have the strong Rees property? 
\end{quest}

As an answer, we show that some powers of the maximal 
ideal $\m$ have the strong Rees property
if $\depth G \ge 2$; see Corollary \ref{depth2}.  
More generally, we can show the following theorem.

\begin{thm}[\textbf{Strong Rees Property}] \label{Strong}
Assume that $\depth A \ge 2$ and 
$H_{\M}^1(G)$ has finite length. 
If $\m^{\ell}$ is Ratliff-Rush closed, then $\m^{\ell}$ has the strong Rees property. 
\end{thm}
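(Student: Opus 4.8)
The plan is to upgrade the weak Rees inequality $\mu(J)\le \mu(\m^{\ell})$, which already holds because $\m^{\ell}=\widetilde{\m^{\ell}}$ is $\m$-full (by $\depth A>0$ and Proposition~\ref{ReesP}), to a strict inequality whenever $J\supsetneq \m^{\ell}$. First I would pass to the case of infinite residue field via the faithfully flat extension $A\to A[X]_{\m A[X]}$, which changes neither $\mu$, nor $\depth A$, nor the finite length of $H_{\M}^{1}(G)$, nor the Ratliff--Rush closedness of $\m^{\ell}$; this lets me choose a sufficiently general $x\in\m\setminus\m^{2}$. I would fix $x$ so that simultaneously $\m^{\ell+1}\colon x=\m^{\ell}$ (this is exactly the $\m$-fullness of $\m^{\ell}$) and the initial form $x^{*}\in[G]_{1}$ is filter-regular on $G$, i.e.\ $(0\colon_{G}x^{*})$ has finite length.

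The key reformulation is a length count: for any $J\supseteq\m^{\ell}$,
\[
\mu(J)-\mu(\m^{\ell})=\ell_A(J/\m^{\ell})-\ell_A(\m J/\m^{\ell+1}),
\]
and multiplication by $x$ defines an $A$-linear map $\bar{x}\colon J/\m^{\ell}\to \m J/\m^{\ell+1}$ whose kernel is $\big((\m^{\ell+1}\colon x)\cap J\big)/\m^{\ell}=(\m^{\ell}\cap J)/\m^{\ell}=0$ by the choice of $x$. Hence $\bar{x}$ is injective (recovering the weak Rees property), and $\m^{\ell}$ has SRP if and only if $\operatorname{coker}\bar{x}=\m J/(xJ+\m^{\ell+1})\neq 0$ for every $J\supsetneq\m^{\ell}$. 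So I would assume, for contradiction, that some $J\supsetneq\m^{\ell}$ satisfies $\m J=xJ+\m^{\ell+1}$, and iterate: multiplying repeatedly by $\m$ yields $\m^{i}J=x^{i}J+\m^{\ell+i}$ for all $i\ge 0$.

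Next I would read the equality through leading forms in $G$. Choosing $a\in J\setminus\m^{\ell}$ of maximal order $v$ (so $v\le \ell-1$ and $0\neq a^{*}\in[G]_{v}$), the relation $\m a\subseteq xJ+\m^{\ell+1}$ forces, in degree $v+1<\ell+1$, that $a^{*}$ is annihilated by a subspace $W\subseteq[G]_{1}$ of codimension at most one with $[G]_{1}=W+k\,x^{*}$; equivalently $\m=(x)+(\m^{\ell+1}\colon a)$. If in fact $[G]_{1}a^{*}=0$, then $a^{*}$ is a nonzero socle element in degree $v$, i.e.\ $0\neq a^{*}\in[H_{\M}^{0}(G)]_{v}=(\widetilde{\m^{v+1}}\cap\m^{v})/\m^{v+1}$; when $v=\ell-1$ this contradicts the Ratliff--Rush closedness of $\m^{\ell}$ by Remark~\ref{RR&G}, and the iteration reduces smaller values of $v$ to this situation.

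The remaining and genuinely hard case is $x^{*}a^{*}\neq 0$: here the iterated equalities $\m^{i}J=x^{i}J+\m^{\ell+i}$ propagate the ``almost-socle'' element $a^{*}$ into every degree $\ge v$, and I would convert this into nonzero classes in $[H_{\M}^{0}(G/x^{*}G)]_{n}$ for infinitely many $n$. Feeding these through the long exact sequence attached to $0\to G(-1)\xrightarrow{x^{*}}G\to G/x^{*}G\to 0$ (valid up to the finite-length module $0\colon_{G}x^{*}$) produces nonzero components of $H_{\M}^{1}(G)$ in infinitely many degrees, contradicting the hypothesis that $H_{\M}^{1}(G)$ has finite length. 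I expect this last cohomological bookkeeping---pinning down exactly which classes survive and showing they are nonzero in infinitely many degrees for a general $J$ (not merely one generated over $\m^{\ell}$ by a single element)---to be the main obstacle; the earlier reduction steps are essentially formal once $x$ is chosen well.
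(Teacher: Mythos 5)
Your setup is correct as far as it goes, and it coincides with Lemma~\ref{Full} of the paper: the injectivity of $\bar{x}\colon J/\m^{\ell}\to \m J/\m^{\ell+1}$ (using $\m^{\ell+1}\colon x=\m^{\ell}$, which follows from Ratliff--Rush closedness), the length identity, the characterization of $\mu(J)=\mu(\m^{\ell})$ by $\m J=xJ+\m^{\ell+1}$, and the iteration $\m^{i}J=x^{i}J+\m^{\ell+i}$ are all right. But after that there is a genuine gap, in two places. First, the deduction that $\m a\subseteq xJ+\m^{\ell+1}$ forces $\m=(x)+(\m^{\ell+1}\colon a)$, or that $a^{*}$ is annihilated by a codimension-one subspace of $[G]_{1}$, is unjustified: for $y\in\m$ you only get $ya=xb+m$ with $b$ an arbitrary element of $J$, not a multiple of $a$, so nothing about the annihilator of $a^{*}$ in $G$ follows. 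Second, and decisively, the case you label ``genuinely hard'' ($x^{*}a^{*}\neq 0$) is precisely where the theorem lives, and you give no argument for it --- only the hope that the iterated equalities yield nonzero classes of $H^{1}_{\M}(G/x^{*}G)$, hence of $H^{1}_{\M}(G)$, in infinitely many degrees. Tracking initial forms of a general $J$ through all the degrees $\ge\ell$ is not routine bookkeeping, and the proposal does not supply the mechanism that makes the hypotheses ($\depth A\ge 2$ and finiteness of $H^{1}_{\M}(G)$) bite.

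For comparison, the paper sidesteps the initial-form analysis entirely by working with the finitely generated $\calR$-module $C=J\calR/\m^{\ell}\calR$. The equality $\m J=xJ+\m^{\ell+1}$ makes $C/(xt)C$ concentrated in degree $0$, so $\dim C\le 1$; Lemma~\ref{Depth}, via the exact sequence $0\to C\to D\to V\to 0$ and Puthenpurakal's identifications $H^{0}_{\M}(L(-1))=H^{0}_{\M}(G)$ and the equivalence of finite length for $H^{1}_{\M}(L(-1))$ and $H^{1}_{\M}(G)$, shows that $[H^{0}_{\M}(C)]_{0}=0$ and $H^{1}_{\M}(C)$ has finite length. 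This is incompatible both with $\dim C=0$ (since $[C]_{0}=J/\m^{\ell}\neq 0$) and with $\dim C=1$ (the top local cohomology of a nonzero finitely generated module is never finitely generated). Any completion of your approach would in effect have to reconstruct this finiteness statement for $C$, so you should either import Lemma~\ref{Depth} or find a genuinely new argument for the hard case before the proof can be considered complete.
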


\par \vspace{2mm}
We first need to prove the following lemma. 

\begin{lemma} \label{Full}
Suppose that 
 for some $x\in \m$, $\m^{\ell+1} \colon x = \m^{\ell}$. 
Then for every ideal $J$ with $J \supset \m^{\ell}$, 
$\mu(J) \le \mu(\m^{\ell})$ is always satisfied, and 
the following conditions are equivalent$:$
\begin{enumerate}
 \item[$(1)$] $\mu(J)=\mu(\m^{\ell})$. 
 \item[$(2)$] $\m J= xJ + \m^{\ell+1}$. 
\end{enumerate}
When this is the case, if we put 
$C=J\mathcal{R}/\m^{\ell}\mathcal{R}$, 
%we obtain the following exact sequence$:$ 
%\[
%0 \to [0 \colon xt](-1) \to C(-1)
%\stackrel{\cdot xt} {\longrightarrow}
%C \to {}_h \big[J/\m^{\ell}\big] \to 0. 
%\]
$C/(xt)C = (C/(xt)C)_0 = J/\m^{\ell}$. 
In particular, $\dim C \le 1$. 
\end{lemma}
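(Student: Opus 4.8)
The plan is to prove the inequality $\mu(J)\le\mu(\m^{\ell})$ together with the equivalence by a single length computation, rather than quoting the Rees property abstractly; this way the equality case and the statement about $C$ fall out of the same bookkeeping. Observe first that the hypothesis $\m^{\ell+1}\colon x=\m^{\ell}$ says precisely that $\m^{\ell}$ is $\m$-full with respect to $x$ (Definition \ref{m-full} with $I=\m^{\ell}$), so Proposition \ref{ReesP} already yields $\mu(J)\le\mu(\m^{\ell})$; the refinement I want is an exact formula for $\mu(\m^{\ell})$ that exposes when equality holds. Since $x\in\m$ and $\m^{\ell}\subseteq J$, one has $xJ+\m^{\ell+1}\subseteq\m J$, so the quotient map
\[
J/(xJ+\m^{\ell+1})\twoheadrightarrow J/\m J
\]
gives $\mu(J)=\ell_A(J/\m J)\le\ell_A\big(J/(xJ+\m^{\ell+1})\big)$, with equality if and only if $\m J=xJ+\m^{\ell+1}$, which is condition $(2)$. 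Thus everything reduces to showing $\ell_A\big(J/(xJ+\m^{\ell+1})\big)=\mu(\m^{\ell})$.

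The key step is the $A$-linear map ``multiplication by $x$''
\[
\bar{x}\colon J/\m^{\ell}\longrightarrow A/\m^{\ell+1},\qquad a+\m^{\ell}\longmapsto xa+\m^{\ell+1}.
\]
It is well defined because $x\m^{\ell}\subseteq\m^{\ell+1}$, and it is injective because its kernel is $(\m^{\ell+1}\colon x)/\m^{\ell}$, which is zero exactly by the hypothesis; its image is $(xJ+\m^{\ell+1})/\m^{\ell+1}$. Hence $\ell_A(J/\m^{\ell})=\ell_A\big((xJ+\m^{\ell+1})/\m^{\ell+1}\big)$. Now I would compute $\ell_A(J/\m^{\ell+1})$ along the two filtrations $\m^{\ell+1}\subseteq\m^{\ell}\subseteq J$ and $\m^{\ell+1}\subseteq xJ+\m^{\ell+1}\subseteq J$ and cancel the common term $\ell_A(J/\m^{\ell})$; this yields $\ell_A\big(J/(xJ+\m^{\ell+1})\big)=\ell_A(\m^{\ell}/\m^{\ell+1})=\mu(\m^{\ell})$. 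Combined with the previous paragraph this proves the inequality and the equivalence $(1)\Leftrightarrow(2)$ simultaneously.

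For the statement about $C$, assume $(2)$ holds. Writing $\mathcal{R}=\bigoplus_{n\ge0}\m^{n}t^{n}$, the graded pieces are $C_n\cong J\m^{n}/\m^{\ell+n}$, and since $xt\in\mathcal{R}_1$ raises degree by one,
\[
(C/(xt)C)_n\cong J\m^{n}/\big(xJ\m^{n-1}+\m^{\ell+n}\big)\ \ (n\ge1),\qquad (C/(xt)C)_0=J/\m^{\ell}.
\]
Multiplying the ideal identity $\m J=xJ+\m^{\ell+1}$ of $(2)$ by $\m^{n-1}$ gives $\m^{n}J=xJ\m^{n-1}+\m^{\ell+n}$, so every piece with $n\ge1$ vanishes; hence $C/(xt)C=(C/(xt)C)_0=J/\m^{\ell}$, which has finite length. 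Finally, since $C$ is a finitely generated graded $\mathcal{R}$-module and $C/(xt)C$ has finite length, the standard dimension drop under one homogeneous element gives $\dim C\le\dim\big(C/(xt)C\big)+1=1$.

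The main obstacle is concentrated in the injectivity of $\bar{x}$. The naive comparison map $J/\m J\to\m^{\ell}/\m^{\ell+1}$ would require $xJ\subseteq\m^{\ell}$, which can fail once $J\not\subseteq\m^{\ell-1}$, so one must instead work with $J/\m^{\ell}\hookrightarrow A/\m^{\ell+1}$ and recover $\mu(\m^{\ell})$ through length bookkeeping. This is the only point where the full strength of the hypothesis $\m^{\ell+1}\colon x=\m^{\ell}$ enters; once the graded description of $C$ is in place, the remaining steps are purely formal.
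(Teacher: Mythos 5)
Your proof is correct and rests on the same key mechanism as the paper's: the hypothesis $\m^{\ell+1}\colon x=\m^{\ell}$ makes multiplication by $x$ injective on $J/\m^{\ell}$, and length bookkeeping turns this into $\mu(J)\le\mu(\m^{\ell})$ with equality exactly when $\m J=xJ+\m^{\ell+1}$, after which the graded computation for $C=J\mathcal{R}/\m^{\ell}\mathcal{R}$ is identical. The only difference is organizational: the paper splices two short exact sequences to get $\ell_A(\m J/\m^{\ell+1})-\ell_A(J/\m^{\ell})=\mu(\m^{\ell})-\mu(J)$ and then uses the injection $J/\m^{\ell}\hookrightarrow \m J/\m^{\ell+1}$, whereas you establish $\ell_A\bigl(J/(xJ+\m^{\ell+1})\bigr)=\mu(\m^{\ell})$ directly from two filtrations of $J/\m^{\ell+1}$ and compare with the surjection onto $J/\m J$ -- the same count packaged slightly more cleanly.
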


\begin{proof}
Consider the following two short exact sequences$:$
\[
0 \; \to \;  \dfrac{\m^{\ell} \cap \m J}{\m^{\ell+1}} \; \to \; \dfrac{\m^{\ell}}{\m^{\ell+1}} 
\; \to \; \dfrac{\m^{\ell}}{\m^{\ell} \cap \m J} \; \to \;  0.
\]
\[
0 \; \to \; \dfrac{\m^{\ell}+\m J}{\m J} \; \to \;\dfrac{J}{\m J} 
\;\to \;\dfrac{J}{\m^{\ell}+\m J} \;\to \; 0.
\]
Since $\m^{\ell}/(\m^{\ell} \cap \m J) 
\cong (\m^{\ell}+\m J)/\m J$, combining two exact sequence 
implies 
\begin{equation}
0 \; \to \;  \dfrac{\m^{\ell} \cap \m J}{\m^{\ell+1}} \; \to \; \dfrac{\m^{\ell}}{\m^{\ell+1}} 
\; \to \;\dfrac{J}{\m J} 
\;\to \;\dfrac{J}{\m^{\ell}+\m J} \;\to \; 0.
\end{equation}
It follows that 
\[
\ell_A(\m^{\ell} \cap \m J/\m^{\ell+1})=
\big\{\mu(\m^{\ell})-\mu(J)\big\}+ 
\ell_A(J/\m^{\ell}+\m J).
\]
Furthermore, since $\m J/(\m^{\ell} \cap \m J) \cong (\m^{\ell}+\m J)/\m^{\ell}$, we get 
\[
\ell_A(\m J/\m^{\ell+1})=\ell_A(J/\m^{\ell})
+\big\{\mu(\m^{\ell})-\mu(J)\big\}
\]
\par 
We now consider an $\mathcal{R}$-module 
 $C=J\mathcal{R}/\m^{\ell}\mathcal{R}$. 
The assumption that $\m^{\ell+1}\colon x=\m^{\ell}$ implies that the multiplication map 
\[
\cdot x \colon C_0 = J/\m^{\ell} \; \to \; 
C_1 = \m J/\m^{\ell+1}
\]
is injective. 
Hence $\mu(\m^{\ell})-\mu(J)=\ell_A(C_1)-\ell_A(C_0) \ge 0$.  
Moreover, equality holds true if and only if  
the multiplication map by $x$ is isomorphism, 
which means that $\m J = xJ +\m^{\ell+1}$.   
\par 
When this is the case, we have 
\[
\m^{n+1} J= x \m^n J + \m^{n+\ell+1}
\]
for every $n \ge 1$. The last assertion immediately 
follows from this. 
\end{proof}

\begin{lemma} \label{Depth}
Let $J\subset A$ be an ideal with 
$J \supsetneq \m^{\ell}$, where $\ell \ge 1$.  
Put $C = J\mathcal{R}/\m^{\ell}\mathcal{R}$.  
Assume that $\depth A \ge 2$, and 
$H_{\M}^1(G)$ has finite length. 
If $\m^{\ell}$ is Ratliff-Rush closed, then 
\begin{enumerate}
\item[$(1)$] $H_{\M}^i(C)$ has finite length for $i=0,1$.
\item[$(2)$] $[H_{\M}^0(C)]_0=0$. 
\end{enumerate}
\end{lemma}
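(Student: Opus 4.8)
The plan is to read off both assertions from a finite filtration of $C$ whose successive subquotients are $G$-submodules of shifts of $G$, so that the hypothesis on $H_{\M}^1(G)$ can be fed in through long exact sequences. First I would record the graded structure: since $J\calR$ is generated in degree $0$ by $J\subseteq\calR_0=A$, one has $C_n=J\m^n/\m^{n+\ell}$ for all $n\ge 0$ (so $C_0=J/\m^{\ell}$), and $C$ is a finitely generated graded $\calR$-module generated in degree $0$. Here I use that $\calR$ is Noetherian, so that every ideal appearing below is finitely generated.

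For $(1)$ I would set $F_i=J\calR\cap\m^i\calR$ for $0\le i\le \ell$. Then $F_0=J\calR$ and $F_{\ell}=\m^{\ell}\calR$ (because $\m^{\ell}\calR\subseteq J\calR$), so $C=F_0/F_{\ell}$ carries the finite filtration $0=F_{\ell}/F_{\ell}\subseteq F_{\ell-1}/F_{\ell}\subseteq\cdots\subseteq F_0/F_{\ell}=C$. The inclusion $F_i\hookrightarrow\m^i\calR$ induces an injection $N_i:=F_i/F_{i+1}\hookrightarrow\m^i\calR/\m^{i+1}\calR$; the target is annihilated by $\m\calR$ and is in fact isomorphic to the non-negative truncation $[G(i)]_{\ge 0}$ of the shifted module $G(i)$, so each $N_i$ is a finitely generated graded $G$-module admitting an embedding $N_i\hookrightarrow G(i)$.

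Now two facts drive the computation. (a) For any finitely generated graded $G$-module $M$ we have $H_{\M}^j(M)=H_{G_{+}}^j(M)$, since $\m\calR$ annihilates $M$ and the image of $\M$ in $G$ is $G_{+}$; because $G_0=A/\m$ is a field, $G_{+}$ is maximal, so $H_{\M}^0(M)=\Gamma_{G_{+}}(M)$ is finitely generated with $0$-dimensional support, hence of finite length. (b) Applying $(a)$ to $N_i$ and to $G(i)/N_i$, the long exact sequence of $0\to N_i\to G(i)\to G(i)/N_i\to 0$ contains
\[
H_{\M}^0(G(i)/N_i)\to H_{\M}^1(N_i)\to H_{\M}^1(G(i)),
\]
whose outer terms have finite length—the left by $(a)$, the right because $H_{\M}^1(G(i))=[H_{\M}^1(G)](i)$ is of finite length by hypothesis—so $H_{\M}^1(N_i)$ has finite length. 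Thus each $N_i$ has finite-length $H_{\M}^0$ and $H_{\M}^1$, and climbing the filtration via the long exact sequences attached to $0\to F_{i+1}/F_{\ell}\to F_i/F_{\ell}\to N_i\to 0$—finite length being inherited by the middle term from the two ends—yields $(1)$. For $(2)$ I would argue directly from Ratliff–Rush closedness: a class in $[H_{\M}^0(C)]_0$ is represented by some $c\in J$ with $\M^k\bar c=0$ for some $k$; since $\calR_k\subseteq\M^k$, already $\calR_k\bar c=0$, which means $\m^kc\subseteq\m^{k+\ell}$, i.e. $c\in(\m^{k+\ell}\colon\m^k)\subseteq\widetilde{\m^{\ell}}=\m^{\ell}$, so $\bar c=0$.

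The main obstacle is the finite length of $H_{\M}^1(N_i)$ in step $(b)$: in contrast to $H_{\M}^0$, the first local cohomology of a $G$-module is not of finite length in general, so one must genuinely use the embedding $N_i\hookrightarrow G(i)$ together with the hypothesis on $H_{\M}^1(G)$, rather than treating $N_i$ in isolation. The remaining care is bookkeeping—identifying $\m^i\calR/\m^{i+1}\calR$ with $[G(i)]_{\ge 0}$, which differs from $G(i)$ only by the finite-length piece $\bigoplus_{0\le j<i}G_j$—and checking that finite length propagates correctly along the two families of long exact sequences.
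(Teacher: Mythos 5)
Your proof is correct, but it takes a genuinely different route from the paper's. For part $(1)$ the paper introduces the auxiliary modules $L(-1)=\bigoplus_{n\ge 0}(A/\m^n)t^n$, $D=\bigoplus_{n\ge 0}(A/\m^{\ell+n})t^n$ and $V=\bigoplus_{n\ge 0}(A/\m^nJ)t^n$, fits $C$ into the exact sequence $0\to C\to D\to V\to 0$, and controls $H^0_{\M}$ and $H^1_{\M}$ of $L(-1)$ by quoting \cite[Proposition 4.7]{P1} and \cite[Theorem 6.2]{P2}; the hypothesis $\depth A\ge 2$ enters there precisely to force $H^0_{\M}(V)$ to have finite length via $H^0_{\M}(A[t])=H^1_{\M}(A[t])=0$. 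You instead filter $C$ by $F_i=J\calR\cap\m^i\calR$ and reduce everything to finitely generated $G$-modules embedded in shifts of $G$, where $H^0_{\M}$ is automatically of finite length (a finitely generated $G$-module killed by a power of $G_+$ is a module over the Artinian ring $G/G_+^k$) and $H^1_{\M}$ is controlled by the hypothesis on $H^1_{\M}(G)$ through the cokernel of $N_i\hookrightarrow G(i)$. Your argument is self-contained, and it actually establishes part $(1)$ from the finiteness of $H^1_{\M}(G)$ alone --- neither $\depth A\ge 2$ nor the Ratliff--Rush closedness of $\m^{\ell}$ is used there --- so it is slightly more general than the paper's; what the paper's route buys is the explicit identification $H^0_{\M}(L(-1))=\bigoplus_{n}\widetilde{\m^n}/\m^n$, which it then reuses for part $(2)$, whereas you obtain $(2)$ by the equally valid direct computation that $\m^kc\subseteq\m^{k+\ell}$ forces $c\in\bigcup_k\left(\m^{k+\ell}\colon\m^k\right)=\widetilde{\m^{\ell}}=\m^{\ell}$ (the only point worth writing out is that $\m^{k+\ell}\colon\m^k\subseteq\m^{\ell n+\ell}\colon\m^{\ell n}$ once $\ell n\ge k$, so this union really is the Ratliff--Rush closure of $\m^{\ell}$ as defined in Definition \ref{RRclos}).
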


\begin{proof}
First we define an $\calR$-module $L(-1)$ as follows$:$
\[
0 \to \calR \to A[t] \to L(-1) \to 0 \;\text{(ex)},
\]
that is, $L(-1)=\bigoplus_{n \ge 0} (A/\m^n) t^n$. 

\begin{flushleft}
{\it Claim 1.} $H_{\M}^i(L(-1))$ has  finite length for 
$i=0,1$. 
\end{flushleft}

By \cite[Proposition 4.7]{P1} we have
\[
H^0_{\M}(L(-1)) = \bigoplus_{n \geq 0} \frac{\widetilde{\m^n}}{\m^n} =H^0_{\M}(G),
\]
which has finite length and then it is proved in 
\cite[Theorem 6.2]{P2} that $H_{\M}^1(L(-1))$ has finite length,
if and only if $H_{\M}^1(G)$ has finite length.
For instance, $\depth G \ge 2$, then  $H_{\M}^i(L(-1))=0$ for each $i=0,1$.

\par \vspace{2mm}
Secondly, we define $D=L(-1)_{\ge \ell}(\ell)
=\bigoplus_{n \ge 0} (A/\m^{\ell+n}) t^n$.   

\begin{flushleft}
{\it Claim 2.} $[H_{\M}^0(D)]_0=0$ and $H_{\M}^1(D)$ has finite length. 
\end{flushleft}
\par
By definition, we have 
\[
0 \;\to\; D(-\ell) \;\to \; L(-1) \;\to\; W \;\to\; 0, 
\]
where $W$ is an $\calR$-module of finite length. 
Then since in the exact sequence 
\[
W=H_{\M}^0(W) \to H_{\M}^1(D(-\ell)) \to H_{\M}^1(L(-1))
\]
the modules of the both sides 
have finite length, 
so does  $H_{\M}^1(D(-\ell))$. 
Moreover, as $H_{\M}^0(D(-\ell)) \subset H_{\M}^0(L(-1))$,
$H_{\M}^0(D(-\ell))$ is also of finite length. 
\par \vspace{2mm} 
The first assertion follows from the fact 
$[H_{\M}^0(D)]_0 \subset \widetilde{\m^{\ell}}/\m^{\ell}$
and our assumption.  

\par \vspace{2mm}
Thirdly, we define an $\calR$-module 
$V = \bigoplus_{n \ge 0} (A/\m^n J) t^n$ as follows$:$ 
\[
0 \; \to \;  J \calR \; \to \; A[t] \;\to\; V \;\to \; 0 
\;\text{(ex)}. 
\]

\par 
By definition of $V$, we have 
\[
0=H_{\M}^0(A[t]) \;\to\; H_{\M}^0(V) \;\to\;
H_{\M}^1(J\calR) \;\to\; H_{\M}^1(A[t])=0,  
\]  
where two vanishing follows from the fact that 
$\depth A \ge 2$. 
It follows that $[H_{\M}^0(V)]_n \cong [H_{\M}^1(J\calR)]_n =0$ 
for large enough $n$. 
On the other hand, as $H_{\M}^0(V) \subset V$, 
$[H_{\M}^0(V)]_n=0$ for each $n \le -1$. 
Thus $H_{\M}^0(V)$ has finite length. 

\par \vspace{2mm}
\begin{flushleft}
{\it Claim 3.} $[H_{\M}^0(C)]_0=0$ and $H_{\M}^1(C)$ has finite length. 
\end{flushleft}
\par \vspace{2mm}
One can easily obtain the following exact sequence:
\[
0 \; \to \;  C \; \to \; D \;\to\; V \;\to \; 0 
\;\text{(ex)}. 
\]
Hence 
\[
\begin{array}{ccccccc}
0 & \to & H_{\M}^0(C) & \to & H_{\M}^0(D) 
  & \to & H_{\M}^0(V) \\[2mm]
  & \to & H_{\M}^1(C) & \to & H_{\M}^1(D) 
  & \to & \cdots 
\end{array}
\]
As $[H_{\M}^0(D)]_0=0$ by Claim 2, we get $[H_{\M}^0(C)]_0=0$. 
Moreover, in the sequence 
\[
H_{\M}^0(D) \to H_{\M}^0(V) \to H_{\M}^1(C) \to 
H_{\M}^1(D),
\]
the both sides of $H_{\M}^1(C)$ have finite length. 
Hence so does $H_{\M}^1(C)$.   
\end{proof}

\begin{proof}[{\bf Proof of Theorem $\ref{Strong}$}]
Choose an $\m$-superficial element $x \in \m \setminus \m^2$. 
By assumption, we have 
\[  
\m^{\ell} \subset \m^{\ell+1} \colon x \subset 
\widetilde{\m^{\ell}}=\m^{\ell}.
\]
Then $\m^{\ell+1} \colon x =\m^{\ell}$
In particular, this means $\m^{\ell}$ is $\m$-full. 
\par
Let $J$ be an ideal with $J \supsetneq \m^{\ell}$. 
By Lemma \ref{Full}, $\mu(J) \le \mu(\m^{\ell})$. 
We want to show that this inequality is strict. 
Now suppose that equality holds true$:$ $\mu(J)=\mu(\m^{\ell})$. 
Put $C=J\calR/\m^{\ell}\calR$. 
In Lemma \ref{Full}, 
we showed that  $C/(xt)C$ has finite length.
Hence we have $\dim C \le 1$. 
\par 
On the other hand, by Lemma \ref{Depth}, we have 
$[H_{\M}^0(C)]_0=0$ and $H_{\M}^1(C)$ has finite length. 
If $\dim C=0$, then $0 \ne J/\m^{\ell} = [C]_0=[H_{\M}^0(C)]_0=0$. This is a contradiction. 
Hence $\dim C=1$. 
Then $H_{\M}^1(C)$ is \textit{not} finitely generated. 
This contradicts the fact that $H_{\M}^1(C)$ has finite length.  
Therefore we conclude that $\mu(J) < \mu(\m^{\ell})$, as required. 
\end{proof}

\par 
In the case where $\depth G \ge 2$, then 
all powers of the maximal ideal have the strong 
Rees property. 

\begin{cor} \label{depth2}
If $\depth G \ge 2$, then $\m^{\ell}$ has the 
strong Rees property for every $\ell \ge 1$. 
\end{cor}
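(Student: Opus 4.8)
The plan is to obtain the corollary as an immediate consequence of Theorem \ref{Strong}; essentially all of the work is to check that the single hypothesis $\depth G \ge 2$ already forces all three assumptions of that theorem, so the statement is really a matter of local-cohomology bookkeeping. I would begin by recalling that depth is detected by local cohomology, $\depth G = \min\{\, i : H_{\M}^i(G) \ne 0 \,\}$, so that $\depth G \ge 2$ is equivalent to the two vanishings $H_{\M}^0(G) = 0$ and $H_{\M}^1(G) = 0$. In particular $H_{\M}^1(G) = 0$ trivially has finite length, which is the second hypothesis of Theorem \ref{Strong}.

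Next I would verify that every power $\m^{\ell}$ is Ratliff-Rush closed. By Remark \ref{RR&G} we have $H_{\M}^0(G) = \bigoplus_{n \ge 0} (\widetilde{\m^{n+1}} \cap \m^{n})/\m^{n+1}$, and since $\depth G \ge 2 \ge 1$ gives $H_{\M}^0(G) = 0$, every summand must vanish, i.e.\ $\widetilde{\m^{\ell}} = \m^{\ell}$ for all $\ell \ge 1$. This is exactly the observation already recorded in the introduction that $\depth G \ge 1$ suffices for all powers of $\m$ to be Ratliff-Rush closed, so the Ratliff-Rush hypothesis of Theorem \ref{Strong} holds for every $\ell$.

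The remaining assumption, $\depth A \ge 2$, is the only point requiring input beyond the bookkeeping above, and it is where I expect the (mild) obstacle to lie. I would deduce it from the standard inequality $\depth A \ge \depth G$. Concretely one may first pass to the faithfully flat local extension $A[X]_{\m A[X]}$, which alters neither $\depth A$ nor $\depth G$ and arranges that the residue field is infinite; then, since $\depth G \ge 2$, one can choose a superficial sequence $x_1, x_2 \in \m$ whose initial forms $x_1^{*}, x_2^{*} \in G_1$ form a $G$-regular sequence of degree-one elements. Such degree-one $G$-regular elements lift to an $A$-regular sequence (a superficial element whose initial form is a nonzerodivisor on $G$ is a nonzerodivisor on $A$, and one inducts via $G(A/x_1A) \cong G/x_1^{*}G$), whence $\depth A \ge 2$ for the original ring as well.

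With $\depth A \ge 2$, with $H_{\M}^1(G)$ of finite length, and with $\m^{\ell}$ Ratliff-Rush closed for each $\ell \ge 1$, all hypotheses of Theorem \ref{Strong} are in place. Applying that theorem once for each $\ell \ge 1$ shows that $\m^{\ell}$ has the strong Rees property, which is the assertion of the corollary. The only substantive step is the depth comparison $\depth A \ge \depth G$; everything else is a direct reading of the vanishing of $H_{\M}^0(G)$ and $H_{\M}^1(G)$.
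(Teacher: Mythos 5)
Your proposal is correct and follows essentially the same route as the paper: the corollary is deduced by checking that $\depth G \ge 2$ yields all three hypotheses of Theorem \ref{Strong}, namely $H_{\M}^1(G)=0$ (hence of finite length), $\widetilde{\m^{\ell}}=\m^{\ell}$ for all $\ell$ via the vanishing of $H_{\M}^0(G)$ in Remark \ref{RR&G}, and $\depth A \ge \depth G \ge 2$. The paper states these implications without elaboration, whereas you supply the standard justifications (e.g.\ lifting a degree-one $G$-regular sequence to an $A$-regular sequence); the content is the same.
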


\begin{proof}
Assume Theorem \ref{Strong} holds true. 
If $\depth G \ge 2$, then $H_{\M}^1(G)=0$ and 
$\depth A \ge 2$. 
Hence the ring $A$ satisfies the assumption on Theorem \ref{Strong}. 
\end{proof}

\begin{cor} \label{grCM}
Let $(A,\m)$ be a Cohen-Macaulay local ring with 
$d=\dim A \ge 2$. 
If $G=G(\m)$ is Cohen-Macaulay, then $\m^{\ell}$ has 
the strong Rees property for each $\ell \ge 1$. 
\end{cor}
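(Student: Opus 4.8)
The plan is to reduce the statement directly to Corollary \ref{depth2}, whose hypothesis is $\depth G \ge 2$. Once that reduction is in place there is essentially nothing left to do, since Corollary \ref{depth2} already asserts that every power $\m^{\ell}$ has the strong Rees property. Thus the only thing I really need to verify is that, under the present assumptions that $A$ is \CM{} with $d = \dim A \ge 2$ and that $G = G(\m)$ is \CM, one has $\depth G \ge 2$.

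First I would recall the standard fact that passing to the associated graded ring preserves dimension, namely $\dim G = \dim A = d$. Since $d \ge 2$ by hypothesis, this already gives $\dim G \ge 2$. Next, because $G$ is \CM{} (with respect to its graded irrelevant maximal ideal $\M$), its depth equals its dimension; combining this with the previous identity yields $\depth G = \dim G = d \ge 2$.

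Having established $\depth G \ge 2$, I would simply invoke Corollary \ref{depth2}, which states precisely that $\m^{\ell}$ has the strong Rees property for every $\ell \ge 1$ whenever $\depth G \ge 2$. This completes the argument.

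There is no genuine obstacle here: the content of the corollary is that the \CM{} hypotheses on both $A$ and $G$ are exactly what force $\depth G$ to attain the value $d \ge 2$ required to apply Corollary \ref{depth2}. The only two external facts I rely on are the equality $\dim G = \dim A$ and the equality of depth and dimension for a \CM{} ring, both of which are entirely standard.
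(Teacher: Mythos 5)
Your argument is correct and is exactly the intended one: the paper states this corollary without proof as an immediate consequence of Corollary \ref{depth2}, and your reduction via $\depth G = \dim G = \dim A = d \ge 2$ is precisely the implicit justification. Nothing further is needed.
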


\begin{cor}\label{normalRees}
If  $\depth A  \geq 2$ and $\m$ is a normal ideal
$($i.e., $\m^n$ is integrally closed for all $n \geq 1)$
 then $\m^{\ell}$ has the 
strong Rees property for every $\ell \ge 1$. 
\end{cor}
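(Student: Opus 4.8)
The plan is to obtain this as a direct application of Theorem \ref{Strong}, so I would only have to check its two nontrivial hypotheses for every $\ell \ge 1$: that $\m^{\ell}$ is Ratliff-Rush closed and that $H_{\M}^1(G)$ has finite length (the assumption $\depth A \ge 2$ is given). The first is immediate from normality. By Remark \ref{RR&G} we always have $\m^{\ell} \subseteq \widetilde{\m^{\ell}} \subseteq \overline{\m^{\ell}}$, and $\m$ normal forces $\overline{\m^{\ell}} = \m^{\ell}$; hence $\widetilde{\m^{\ell}} = \m^{\ell}$. The same observation, fed into the formula $H_{\M}^0(G) = \bigoplus_{n \ge 0}(\widetilde{\m^{n+1}} \cap \m^{n})/\m^{n+1}$ of Remark \ref{RR&G}, gives $H_{\M}^0(G) = 0$; so $\depth G \ge 1$ and only the finite length of $H_{\M}^1(G)$ is left to prove.

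For that I would recycle the exact sequences from the proof of Lemma \ref{Depth}. Starting from $0 \to \calR \to A[t] \to L(-1) \to 0$ and using $\depth A \ge 2$ (so that $H_{\M}^0(A[t]) = H_{\M}^1(A[t]) = 0$), the long exact sequence yields $H_{\M}^1(\calR) \cong H_{\M}^0(L(-1)) = H_{\M}^0(G) = 0$ together with a left-exact piece $0 \to H_{\M}^1(L(-1)) \to H_{\M}^2(\calR) \to H_{\M}^2(A[t])$. By the equivalence recorded in Claim 1 of Lemma \ref{Depth} (via \cite[Theorem 6.2]{P2}), $H_{\M}^1(G)$ has finite length if and only if $H_{\M}^1(L(-1))$ does, so it suffices to show that the kernel $\ker\bigl(H_{\M}^2(\calR) \to H_{\M}^2(A[t])\bigr)$ has finite length.

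This finiteness is the step at which normality, rather than mere Ratliff-Rush closedness, has to be used, and I expect it to be the main obstacle. The relevant feature of a normal ideal is that $\calR = \calR(\m)$ is integrally closed in $A[t]$, since $\overline{\m^n} = \m^n$ for all $n$; in the setting of the applications (where $A$ is a normal domain) this makes $\calR$ a normal graded ring, so it satisfies Serre's condition $(S_2)$, and in general it forces $A$ to be analytically unramified so that the filtration $\{\overline{\m^n}\} = \{\m^n\}$ is well behaved. I would use this structure to control $H_{\M}^2(\calR)$ in the low degrees: its graded components vanish for $n \gg 0$ and stabilize for $n \ll 0$, and the point is to show that the comparison with $H_{\M}^2(A[t])$ kills everything outside a bounded range of degrees, so that the kernel above lives in finitely many degrees and is therefore of finite length. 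Granting this, both hypotheses of Theorem \ref{Strong} are verified for all $\ell$, and the strong Rees property follows; the delicate part that I would need to pin down carefully is precisely the finite length of $\ker\bigl(H_{\M}^2(\calR) \to H_{\M}^2(A[t])\bigr)$ coming from the $(S_2)$/analytically unramified structure.
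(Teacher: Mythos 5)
Your treatment of the Ratliff--Rush hypothesis is correct and coincides with the paper's: normality gives $\m^{\ell} \subseteq \widetilde{\m^{\ell}} \subseteq \overline{\m^{\ell}} = \m^{\ell}$, so every power is Ratliff--Rush closed. The problem is the other hypothesis of Theorem \ref{Strong}. Your reduction of the finite length of $H_{\M}^1(G)$ to the finite length of $\ker\bigl(H_{\M}^2(\calR) \to H_{\M}^2(A[t])\bigr)$ is legitimate (it uses $H_{\M}^1(A[t])=0$ from $\depth A \ge 2$ and the equivalence from \cite[Theorem 6.2]{P2}), but at that point you have only restated the problem: the finiteness of that kernel \emph{is} the entire analytic content of the corollary, and the closing paragraph about $(S_2)$ and analytic unramifiedness is a heuristic rather than an argument. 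The assertions that the graded pieces of $H_{\M}^2(\calR)$ ``stabilize for $n \ll 0$'' and that the comparison map with $H_{\M}^2(A[t])$ is injective outside a bounded range of degrees are exactly the statements that would have to be proved, and nothing in the proposal proves them; you flag this yourself as the unresolved step. As written, the proof is incomplete.

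The paper closes precisely this gap by an external input: by Huckaba--Huneke \cite[Theorem 3.1]{HH}, normality of $\m$ forces $\depth G(\m^n) \ge 2$ for all $n \gg 0$, and from this one deduces that $H_{\M}^1(G(\m))$ has finite length; Theorem \ref{Strong} then applies exactly as you intend. If you want to complete your own route, you would need a genuine theorem about normal Rees algebras (Itoh's results on $H^2$ of normal Rees algebras are the natural source) showing that $H_{\M}^2(\calR)$ agrees with $H_{\M}^2(A[t])$ in all but finitely many degrees; this does not follow formally from $\calR$ satisfying $(S_2)$.
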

\begin{proof} By  \cite[Theorem 3.1]{HH} 
we get $\depth G(\m^n) \geq 2$ for all $n \gg 0$.  
Here  $G(\m^n)$ denotes the associated graded ring of $\m^n$. 
It follows that $H^1_{\M}(G(\m))$ has finite length. Also as $\m^r$ is integrally closed  for all $r \geq 1$ we get that it is Ratliff-Rush closed. It follows that $\m^r$ has SRP for all $r \geq 1$ by Theorem \ref{Strong}.
\end{proof}

\begin{exam} \label{SRP-exam}
Let $(A,\m)$ be a two-dimensional excellent normal 
local domain. 
If $\m$ is a $p_g$-ideal (e.g. $A$ is a rational singularity), then $\m^{\ell}$ has 
the strong Rees property for every $\ell \ge 1$. 
\end{exam}

\par 
On the other hand, \lq\lq$\depth G \ge 1$'' can be characterized by the Rees property. 

\begin{prop} \label{Rees}
Put $G =G(\m)$, where $d=\dim G \ge 1$.  
Then the following conditions are equivalent$:$
\begin{enumerate}
 \item[$(1)$] $\depth G \ge 1$. 
 \item[$(2)$] $\m^{\ell}$ has the Rees property for every $\ell \ge 1$.  
\end{enumerate}
\end{prop}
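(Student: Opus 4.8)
The plan is to prove the two implications separately, using as the central dictionary the identity
\[
H_{\M}^0(G) = \bigoplus_{n \ge 0} (\widetilde{\m^{n+1}}\cap \m^{n})/\m^{n+1}
\]
from Remark \ref{RR&G}, which shows that $\depth G \ge 1$ (equivalently $H_{\M}^0(G)=0$) holds if and only if $\widetilde{\m^{n+1}}\cap \m^n = \m^{n+1}$ for every $n \ge 0$. First I would upgrade this to the cleaner equivalence that $\depth G \ge 1$ holds if and only if every power $\m^{\ell}$ is Ratliff-Rush closed. The nontrivial direction is the forward one: given $z \in \widetilde{\m^{\ell}}\setminus \m^{\ell}$, let $s=\ord(z)\le \ell-1$ be its $\m$-adic order; since $\widetilde{\m^{\ell}}=\{w : w\m^k\subseteq \m^{\ell+k}\text{ for }k\gg 0\}$ is monotone in $\ell$, we get $z \in \widetilde{\m^{s+1}}$, hence $z \in \widetilde{\m^{s+1}}\cap \m^s = \m^{s+1}$, contradicting $\ord(z)=s$. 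The reverse direction is immediate from the displayed identity.

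For $(1)\Rightarrow(2)$ I would first note that $\depth G \ge 1$ forces $\depth A \ge 1$: if $\depth A=0$ there is a nonzero $a \in \m^k\setminus\m^{k+1}$ with $k \ge 1$ and $\m a =0$, so its initial form $a^{\ast}\in G_k$ is nonzero with $G_1 a^{\ast}=0$, whence (as $G$ is standard graded) $G_{+}a^{\ast}=0$ and $H_{\M}^0(G)\ne 0$. With $\depth A \ge 1$ in hand and every $\m^{\ell}$ Ratliff-Rush closed by the previous paragraph, \cite[Proposition 2.2]{AP} (equivalently, the superficial-element computation $\m^{\ell}\subseteq \m^{\ell+1}\colon x \subseteq \widetilde{\m^{\ell}}=\m^{\ell}$ already carried out in the proof of Theorem \ref{Strong}) shows that each $\m^{\ell}$ is $\m$-full. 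Proposition \ref{ReesP} then yields the Rees property for every $\ell \ge 1$.

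For $(2)\Rightarrow(1)$ I would argue by contraposition and produce a single power that fails the Rees property whenever $\depth G = 0$. In that case $H_{\M}^0(G)\ne 0$, so there is a nonzero homogeneous element of $G$ annihilated by $G_{+}$; it cannot sit in degree $0$ (that would force $G_1=\m/\m^2=0$, i.e. $\m=0$, against $d\ge 1$), so it lies in some degree $n-1$ with $n\ge 2$. Concretely there is $z \in \m^{n-1}\setminus\m^{n}$ with $\m z \subseteq \m^{n+1}$. Setting $J=(z)+\m^{n}\supsetneq \m^{n}$, one has $\m J = \m z + \m^{n+1}=\m^{n+1}$, and $z\notin \m^{n+1}=\m J$, so $z$ is a genuinely new minimal generator; thus $\mu(J)=\mu(\m^{n})+1$, which is exactly the conclusion of the Lemma following Remark \ref{RR&G}. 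Hence $\m^{n}$ does not have the Rees property and $(2)$ fails.

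The step I expect to be the main obstacle is the forward implication of the first paragraph, namely converting the $H_{\M}^0$ formula into the statement that all powers are Ratliff-Rush closed; once that dictionary, the monotonicity characterization of $\widetilde{\m^{\ell}}$, and the correct socle datum (degree $n-1$ with $\m z \subseteq \m^{n+1}$, not merely $\m z \subseteq \m^{n}$) are pinned down, the rest is bookkeeping. As a sanity check I would also verify the boundary case $\ell=1$: the only ideal properly containing $\m$ is $A$, with $\mu(A)=1\le \mu(\m)$, so $\m$ always has the Rees property, consistent with the failing power in $(2)\Rightarrow(1)$ always having $n\ge 2$.
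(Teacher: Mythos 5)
Your proof is correct and follows essentially the same route as the paper: for $(1)\Rightarrow(2)$ the paper likewise passes through Ratliff--Rush closedness and $\m$-fullness of each $\m^{\ell}$ and invokes the Rees property of $\m$-full ideals, and for $(2)\Rightarrow(1)$ it uses exactly your socle element $z$ with $\m z\subseteq\m^{\ell+2}$ and the ideal $J=(z)+\m^{\ell+1}$ to violate the Rees property. Your write-up merely supplies more detail (the monotonicity argument for Ratliff--Rush closures and the reduction $\depth G\ge 1\Rightarrow\depth A\ge 1$) than the paper's terse first implication.
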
 

\begin{proof}
$(1) \Longrightarrow (2):$ 
If $\depth G \ge 1$, then $\widetilde{\m^{\ell}}= \m^{\ell}$ and $\m^{\ell}$ is $\m$-full for every $\ell \ge 1$. 
In particular, $\m^{\ell}$ has the Rees property. 
\par \vspace{2mm} \par \noindent 
$(2) \Longrightarrow (1):$ 
Now suppose $\depth G=0$. Then $H_{\M}^0(G) \ne 0$.
There exist an integer $\ell \ge 1$ and an element 
$z \in \m^{\ell} \setminus \m^{\ell+1}$ such that 
$0 \ne z^{*}=z+\m^{\ell+1} \in [H_{\M}^0(G)]_{\ell} \cap \Soc(H_{\M}^0(G))$. 
Then $\m z \subset \m^{\ell+2}$. 
If we put $J=(z)+\m^{\ell+1}$, then 
$\m J= \m (z)+\m^{\ell+2}=\m^{\ell+2}$. 
Hence 
\[
J/\m J = ((z)+\m^{\ell+1})/\m^{\ell+2} \supsetneq 
\m^{\ell+1}/\m^{\ell+2}
\]
and so $\mu(J)=\ell_A(J/\m J)> \ell_A(\m^{\ell+1}/\m^{\ell+2})=\mu(\m^{\ell+1})$. 
This contradicts the assumption that $\m^{\ell+1}$ has the 
Rees property.
\end{proof}

\par \vspace{2mm}
We now consider the case of $\depth G=1$. 
We need the following lemma. 

\begin{lemma} \label{Socle-Ini}
Suppose that $\depth G=1$. 
Assume that $J \supsetneq \m^{\ell}$ so that $\mu(J)=\mu(\m^{\ell})$.  
Then one can find elements $x \in \m\setminus \m^2$ 
and $y \in J \setminus (\m^{\ell}+(x))$ such that 
$x^{*}$ is $G$-regular and $0 \ne z^{*} \in 
\Soc(H_{\M}^0(\overline{G}))$,
where $z=\overline{y}\in A/xA$ and 
$z^{*}$ denotes the initial form of $z$ in 
$\overline{G} =G/x^{*}G$.
\end{lemma}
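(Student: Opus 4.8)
The plan is to reduce the whole statement to an order computation in the one‑dimensional ring $\overline{A}=A/xA$, fed by the multiplicative identity coming out of Lemma~\ref{Full}. First I would set up the element $x$. Since $\depth G=1\ge 1$, after the harmless reduction to an infinite residue field I choose $x\in\m\setminus\m^2$ superficial for $\m$ whose leading form $x^{*}$ is a nonzerodivisor on $G$; then $x$ is a nonzerodivisor on $A$ and, by the standard comparison, $\overline{G}=G/x^{*}G\cong G(A/xA)$. As $x^{*}$ is $G$-regular, $\depth\overline{G}=0$, so $H^{0}_{\M}(\overline{G})\ne 0$ and its socle is nonzero. Because $\depth G\ge 1$ the ideal $\m^{\ell}$ is Ratliff--Rush closed (Remark~\ref{RR&G}), hence $\m^{\ell+1}\colon x=\m^{\ell}$, so Lemma~\ref{Full} applies to the hypothesis $\mu(J)=\mu(\m^{\ell})$ and yields $\m J=xJ+\m^{\ell+1}$. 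Reducing modulo $x$ this reads
\[
\overline{\m}\cdot\overline{y}\subseteq\overline{\m}^{\,\ell+1}\qquad\text{for every }y\in J,
\]
where $\overline{(\,\cdot\,)}$ denotes the image in $\overline{A}$.

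Granting for the moment an element $y\in J$ with $y\notin\m^{\ell}+(x)$, the conclusion falls out cleanly. Put $z=\overline{y}$ and $d=\ord_{\overline{\m}}(z)$. The condition $y\notin\m^{\ell}+(x)$ says precisely $z\notin\overline{\m}^{\,\ell}$, i.e. $d\le\ell-1$, so the initial form $z^{*}\in[\overline{G}]_{d}$ is nonzero. For any $w\in\m\setminus\m^{2}$ the product $w^{*}z^{*}\in[\overline{G}]_{d+1}$ is the initial form of $\overline{w}\,z$ when that element has order $d+1$ and is $0$ otherwise; but $\overline{w}\,z\in\overline{\m}\,z\subseteq\overline{\m}^{\,\ell+1}\subseteq\overline{\m}^{\,d+2}$, so its order exceeds $d+1$ and hence $w^{*}z^{*}=0$. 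Since $[\overline{G}]_{0}=k$ is a field, the action of $\M$ on $\overline{G}$ factors through the irrelevant ideal $[\overline{G}]_{+}$, which is generated by $[\overline{G}]_{1}$; thus $[\overline{G}]_{1}\,z^{*}=0$ forces $\M z^{*}=0$. Consequently $z^{*}\in H^{0}_{\M}(\overline{G})$ and $0\ne z^{*}\in\Soc(H^{0}_{\M}(\overline{G}))$, as wanted.

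The hard part will be producing such a $y$, i.e. arranging $J\not\subseteq\m^{\ell}+(x)$ (equivalently $\overline{J}\supsetneq\overline{\m}^{\,\ell}$) for a regular superficial $x$. I would take $y\in J\setminus\m^{\ell}$ of least order $n\le\ell-1$, so that $h:=y^{*}\in[G]_{n}$ is a nonzero form, and then look for $x$ with $h\notin x^{*}G$; since $\ord(y)=n<\ell$, this is exactly the requirement $y\notin\m^{\ell}+(x)$. For a fixed $h$ one expects this for generic $x$: the linear forms dividing $h$ cut out a proper closed locus, while the $G$-regular linear forms---those avoiding the finitely many associated primes of $G$, none equal to $\M$ because $\depth G\ge 1$---are dense, so a regular form off the divisibility locus should do. I expect this genericity/divisibility analysis to be the real obstacle, precisely because $\depth G=1$ forbids the usual shortcut of extending $x^{*}$ to a $G$-regular sequence (there is no second $G$-regular form modulo $x^{*}$); the argument must therefore be run directly on the single leading form $h$, and it is exactly here that the reduced structure of $G$ and the choice of $x$ have to be controlled simultaneously.
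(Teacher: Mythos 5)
Your proposal is correct and takes essentially the same route as the paper: apply Lemma~\ref{Full} to get $\m J = xJ+\m^{\ell+1}$, reduce modulo a $G$-regular $x\in\m\setminus\m^2$ to produce the nonzero socle element $z^{*}$, and secure $J\not\subseteq\m^{\ell}+(x)$ by a genericity argument on linear forms not dividing the initial form of an element of $J\setminus\m^{\ell}$ of least order. The ``hard part'' you flag is precisely what the paper handles in Remark~\ref{exis-x}, via the same divisibility/dimension count you sketch (if $y-ax\in\m^{\ell}$ then $y^{*}=a^{*}x^{*}$, which a generic regular linear form avoids).
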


%%% Proof of Lemma 3.10
\begin{proof}
Take an element $x \in \m \setminus \m^2$ so that 
$x^{*}$ is $G$-regular and $J \not \subset \m^{\ell}+(x)$;
see the remark below for the exitence of $x$. 
Choose $y \in J \setminus (\m^{\ell}+(x))$. 
Then $y \in (\m^{k}+(x)) \setminus (\m^{k+1}+(x))$ for some $k$ 
with $0 \le k \le \ell-1$. 
By assumption and Lemma \ref{Full}, we have that 
$\m y \subset \m J =xJ+\m^{\ell+1}$. 
Let $\overline{~\quad~}$ denote the image of the surjection $A \to A/xA$ and put $z = \overline{y}$. 
Then $\overline{\m} z \subset \overline{m}^{\ell+1} 
\subset \overline{m}^{k+2}$, which means that 
$0 \ne z^{*} \in [\Soc(H_{\M}^0(G/x^{*}G))]_k$. 
\end{proof}

\begin{rem} \label{exis-x}
Suppose $\depth G=1$. 
For any ideal $J \supsetneq \m^{\ell}$, one can find an element 
$x \in \m \setminus \m^2$ so that $x^{*}$ is $G$-regular
and $J$  is not contained in $\m^{\ell}+ (x)$.  
In order to prove this, it suffices to consider the case 
 $\ell(J/\m^{\ell})=1$, that is, 
$J =(f)+\m^{\ell}$, where 
$f\not\in \m^{\ell}$ and $\m f \subset \m^{\ell}$. 
\par 
If $J \subset \m^{\ell}+(x)$ for such an element $x$ as above, 
then $f -ax \in \m^{\ell}$ for some $a \in A$. 
As $x^{*}$ is $G$-regular (and thus $a^{*}x^{*} \ne 0$),  
we obtain the relation $a^{*}x^{*}=f^{*}$ in $G$.  
Let $G=k[X]/\mathfrak{a}$ and let $F$ be the inverse image of 
$f^{*}$ in the polynomial ring $k[X]$. 
As $k$ is an infinite field and $\dim k[X]/(\mathfrak{a}+(F)) \ge 1$, 
one can find a homogeneous element $X$ of degree one which 
does not vanish on $V(\mathfrak{a}+(F))$. 
The required assertion follows from here. 
\end{rem}

\par 
Hence we have the following. 
%%% Proposition 3.11
\begin{prop} \label{Upper}
Suppose that $\depth G=1$. 
Let $x^{*} \in G_1$ be a nonzero divisor of $G$ and 
put $\overline{G}=G/x^{*}G$. 
If $[\Soc(H_{\M}^0(\overline{G})]_{\ell} =0$ for all $\ell <n$, then $\m^{\ell}$ has the strong Rees property for all $\ell \le n$.   
\end{prop}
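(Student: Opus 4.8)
The plan is to reduce the strong Rees property of $\m^{\ell}$ to the socle hypothesis through Lemma \ref{Socle-Ini}, the one genuinely new ingredient being that the socle appearing there is intrinsic, i.e.\ independent of the chosen degree-one nonzerodivisor. I would organize this in three steps.

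First I would set up the reduction. Since $\depth G=1\ge 1$, Remark \ref{RR&G} gives $H_{\M}^0(G)=0$, so every power $\m^{\ell}$ is Ratliff-Rush closed. Picking an $\m$-superficial element $x\in\m\setminus\m^2$ then yields $\m^{\ell+1}\colon x=\m^{\ell}$ exactly as in the proof of Theorem \ref{Strong}, so $\m^{\ell}$ is $\m$-full and Lemma \ref{Full} already forces $\mu(J)\le\mu(\m^{\ell})$ for every ideal $J\supseteq\m^{\ell}$. Hence, fixing $\ell\le n$, it suffices to exclude the equality $\mu(J)=\mu(\m^{\ell})$ for $J\supsetneq\m^{\ell}$.

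The key step, which I expect to be the main obstacle, is to show that for every $G$-regular $x^{*}\in G_1$ one has a degree-shifted identification $\Soc\bigl(H_{\M}^0(G/x^{*}G)\bigr)\cong\Soc\bigl(H_{\M}^1(G)\bigr)(-1)$; in particular $[\Soc(H_{\M}^0(G/x^{*}G))]_k$ does not depend on which $x^{*}$ is used. This comes from the short exact sequence $0\to G(-1)\xrightarrow{\,x^{*}\,}G\to\overline{G}\to 0$: applying local cohomology and using $H_{\M}^0(G)=0$ gives $H_{\M}^0(\overline{G})\cong(0:_{H_{\M}^1(G)}x^{*})(-1)$, and since the socle of a submodule $N\subseteq M$ equals $N\cap\Soc(M)$ while any element of $\Soc(H_{\M}^1(G))$ is automatically killed by $x^{*}\in\M$, the claimed identification drops out. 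This independence is precisely what reconciles the (arbitrary) regular element produced by Lemma \ref{Socle-Ini} with the fixed $x^{*}$ occurring in the hypothesis, so it is the crux of the argument.

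Finally I would argue by contradiction. Suppose $\mu(J)=\mu(\m^{\ell})$ for some $J\supsetneq\m^{\ell}$ with $\ell\le n$. Lemma \ref{Socle-Ini} then supplies an element $x\in\m\setminus\m^2$ with $x^{*}$ $G$-regular and a nonzero $z^{*}\in\Soc(H_{\M}^0(G/x^{*}G))$; tracking degrees in that proof, $z^{*}$ lives in degree $k$ with $0\le k\le\ell-1$. Since $k\le\ell-1\le n-1<n$, the hypothesis $[\Soc(H_{\M}^0(\overline{G}))]_k=0$ together with the independence established above forces $z^{*}=0$, a contradiction. Therefore $\mu(J)<\mu(\m^{\ell})$ for every $J\supsetneq\m^{\ell}$, that is, $\m^{\ell}$ has the strong Rees property for each $\ell\le n$.
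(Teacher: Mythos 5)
Your proposal is correct and follows essentially the same route as the paper: establish that $\Soc(H_{\M}^0(\overline{G}))\cong\Soc(H_{\M}^1(G))(-1)$ is independent of the chosen degree-one nonzerodivisor via the short exact sequence $0\to G(-1)\xrightarrow{x^{*}}G\to\overline{G}\to 0$, then apply Lemma \ref{Socle-Ini} to a hypothetical $J\supsetneq\m^{\ell}$ with $\mu(J)=\mu(\m^{\ell})$ to produce a nonzero socle element in degree $k\le\ell-1<n$, contradicting the hypothesis. Your preliminary reduction via Lemma \ref{Full} makes explicit what the paper leaves implicit, but the argument is the same.
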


%%% Proof of Proposition 3.11
\begin{proof}
First we note that $\Soc(H_{\M}^0(G/x^{*}G))$ is independent of 
the choice of $x$.  In fact, the short exact sequence 
\[
0 \longrightarrow  G(-1)  \stackrel{x^{*}}{\longrightarrow} G\longrightarrow \overline{G}:=
G/x^{*}G  \longrightarrow 0
\]
yields a short exact seuence
\[
H_{\M}^0(G)=0 \longrightarrow H_{\M}^0(\overline{G})
 \longrightarrow H_{\M}^1(G)(-1) 
 \stackrel{x^{*}}{\longrightarrow} H_{\M}^1(G).
\]
Taking a socle, we get 
\[
0 \longrightarrow \Soc(H_{\M}^0(\overline{G}))
 \longrightarrow \Soc(H_{\M}^1(G))(-1) 
 \stackrel{x^{*}}{\longrightarrow} \Soc(H_{\M}^1(G)).
\]
Since the last map is a zero map, $\Soc(H_{\M}^0(\overline{G}))
\cong  \Soc(H_{\M}^1(G))(-1) $ is independent of the choice of $x$. 
\par 
Now suppose that $\m^{\ell}$ does not have SRP. 
Then we can find an ideal $J \supsetneq \m^{\ell}$ so that 
$\mu(J)=\mu(\m^{\ell})$.  
By Lemma \ref{Socle-Ini}, there exist an element $x \in \m \setminus \m^2$ such that $x^{*}$ is $G$-regular and 
$0 \ne [\Soc(H_{\M}^0(G/x^{*}G)]_k$ for some $0 \le k \le \ell-1 
\le n-1$. This contradicts the assumption. 
\end{proof}

%Yos0802
%\begin{prop}\label{Upper}
%Let $x^{*} \in G_1$ be a nonzero divisor of $G$ and 
%put $\overline{G}=G/x^{*}G$. 
%If $H_{\M}^0(\overline{G})_{\ell} =0$ for all $\ell <n$, then %$\m^{\ell}$ has the strong Rees property for all $\ell \le n$.   
%\end{prop}
%\begin{proof} Take $J\supsetneq \m^{\ell}$ and let $k$ be
% maximal so that $J\subset \m^k$. Take $y\in J, 
% y\not\in \m^{k+1}$.
%Then by Lemma \ref{Full}, we have 
%$\m J = x J + \m^{\ell+2} \subset x\m^k + \m^{k+2}$, 
%which implies that $y + \m^{k+1} \in H_{\M}^0(\overline{G})_{k}$. 
%Hence we must have $\ell \ge n$.
%\end{proof}

%%% Propostion 3.12
\begin{prop} \label{Border}
When $\depth G=1$, there exists an $n \in \mathbb{N} \cup \{\infty\}$ such that $\m^{\ell}$ has the strong Rees property 
if and only if $1 \le \ell \le n$. 
\end{prop}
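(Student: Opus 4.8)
The plan is to show that the set of exponents $\ell$ for which $\m^{\ell}$ \emph{fails} to have SRP is closed upward, so that its complement---the set of $\ell$ with SRP---is an initial segment $\{1,2,\dots,n\}$ of $\mathbb{Z}_{\ge 1}$ (allowing $n=0$ for the empty set and $n=\infty$ for all of $\mathbb{Z}_{\ge 1}$, which is exactly the clause $n\in\mathbb{N}\cup\{\infty\}$). The whole proposition thereby reduces to the single implication: \emph{if $\m^{\ell-1}$ does not have SRP, then neither does $\m^{\ell}$}, valid for every $\ell\ge 2$. Iterating this implication propagates any failure upward, and a downward-closed subset of $\mathbb{Z}_{\ge 1}$ is precisely one of the form $\{1,\dots,n\}$; these two facts together give the statement.

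Before the main step I would fix, once and for all, an element $x\in\m\setminus\m^2$ whose initial form $x^{*}$ is a nonzerodivisor on $G$. Such an $x$ exists because $\depth G=1$ (the existence is the same one invoked in Remark~\ref{exis-x}, after passing if necessary to an extension making the residue field infinite). The regularity of $x^{*}$ yields $\m^{\ell+1}\colon x=\m^{\ell}$ for every $\ell$: if $a\in\m^{j}\setminus\m^{j+1}$ with $j<\ell$ then $x^{*}a^{*}\neq 0$, so $xa\notin\m^{j+2}\supseteq\m^{\ell+1}$, a contradiction. Hence every power $\m^{\ell}$ is $\m$-full and Lemma~\ref{Full} applies at each level with this fixed $x$.

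For the crux, suppose $\m^{\ell-1}$ fails SRP, so there is an ideal $J\supsetneq\m^{\ell-1}$ with $\mu(J)=\mu(\m^{\ell-1})$. By Lemma~\ref{Full} applied at level $\ell-1$ this forces $\m J=xJ+\m^{\ell}$ and, more generally, $\m^{2}J=x\m J+\m^{\ell+1}$. I would then set $J'=\m J$. On one hand $J'\supsetneq\m^{\ell}$: choosing $f\in J$ with $f\notin\m^{\ell-1}$, say $f\in\m^{k}\setminus\m^{k+1}$ with $k\le\ell-2$, the regularity of $x^{*}$ gives $(xf)^{*}=x^{*}f^{*}\neq 0$, so $xf\in\m^{k+1}\setminus\m^{k+2}$ and hence $xf\in J'\setminus\m^{\ell}$ (using $\m^{\ell}\subseteq\m^{k+2}$). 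On the other hand $\m J'=\m^{2}J=x\m J+\m^{\ell+1}=xJ'+\m^{\ell+1}$, which is exactly condition $(2)$ of Lemma~\ref{Full} at level $\ell$; therefore $\mu(J')=\mu(\m^{\ell})$ and $\m^{\ell}$ fails SRP, as required.

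The step I expect to be the only genuinely delicate point is the strict containment $J'\supsetneq\m^{\ell}$: it is here, and only here, that the nonzerodivisor property of $x^{*}$ (rather than mere superficiality) is used, to guarantee that multiplying a true element of $J\setminus\m^{\ell-1}$ by $x$ produces an element still escaping $\m^{\ell}$. Everything else is a direct application of the two numerical identities supplied by Lemma~\ref{Full}. As an alternative route one could instead combine Proposition~\ref{Upper} with Lemma~\ref{Socle-Ini} to characterize the SRP of $\m^{\ell}$ by the vanishing of $[\Soc(H_{\M}^{0}(\overline{G}))]_{k}$ for all $k<\ell$, and then read off $n$ as the least degree in which this socle is nonzero; but the construction $J\mapsto\m J$ above seems the shortest path, since it produces the upward-closure of failures without having to establish the full socle equivalence.
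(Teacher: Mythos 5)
Your proposal is correct and follows essentially the same route as the paper: the paper also reduces the proposition to the one-step propagation statement (its Lemma \ref{not-SRP}) and proves it by taking $J\supsetneq\m^{\ell}$ with $\m J=xJ+\m^{\ell+1}$ from Lemma \ref{Full} and passing to $\m J$, exactly your $J\mapsto J'=\m J$ construction. The only cosmetic difference is how strictness of the new containment is checked (the paper argues via $\m^{\ell+1}\colon x=\m^{\ell}$, you argue directly with the nonzerodivisor property of $x^{*}$), which amounts to the same thing.
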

 
\par 
In order to prove the proposition, it suffices to show 
the following lemma. 

%%% Lemma 3.13
\begin{lemma} \label{not-SRP}
Suppose that $\depth G=1$. 
If $\m^{\ell}$ does not have the strong Rees property, 
then neither does $\m^{\ell+1}$. 
\end{lemma}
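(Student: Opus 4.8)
The plan is to prove the lemma by directly converting a witness to the failure of SRP for $\m^{\ell}$ into one for $\m^{\ell+1}$, keeping the same superficial element throughout. Since $\depth G \ge 1$, every power $\m^{j}$ is Ratliff--Rush closed, so exactly as in the proof of Theorem \ref{Strong} I would fix a single element $x \in \m \setminus \m^2$ with $\m^{j+1}\colon x = \m^{j}$ for all $j \ge 1$; in particular each $\m^{j}$ is $\m$-full and has the Rees property.

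Suppose $\m^{\ell}$ does not have SRP. Because $\m^{\ell}$ is $\m$-full, Proposition \ref{ReesP} forces any ideal $J \supsetneq \m^{\ell}$ to satisfy $\mu(J) \le \mu(\m^{\ell})$, so the failure of SRP produces an ideal $J \supsetneq \m^{\ell}$ with $\mu(J) = \mu(\m^{\ell})$. By the equivalence in Lemma \ref{Full} this means $\m J = xJ + \m^{\ell+1}$, and moreover the iterated identity $\m^{2}J = x\,\m J + \m^{\ell+2}$ recorded in that lemma (case $n=1$) is then available.

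The candidate witness for $\m^{\ell+1}$ is $J' := \m J\ (= xJ + \m^{\ell+1})$. First I would check the strict containment $J' \supsetneq \m^{\ell+1}$: since $\m^{\ell} \subseteq J$ we get $\m^{\ell+1} = \m\,\m^{\ell} \subseteq \m J = J'$, while choosing $u \in J \setminus \m^{\ell}$ gives $xu \in J'$ with $xu \notin \m^{\ell+1}$ (otherwise $u \in \m^{\ell+1}\colon x = \m^{\ell}$). Next I would compute $\m J' = \m^{2}J = x\,\m J + \m^{\ell+2} = xJ' + \m^{\ell+2}$ via the iterated identity. Applying Lemma \ref{Full} with $\ell$ replaced by $\ell+1$ and $J$ by $J'$ (its hypothesis $\m^{\ell+2}\colon x = \m^{\ell+1}$ holds by the choice of $x$), condition $(2)$ is exactly what was just verified, so $\mu(J') = \mu(\m^{\ell+1})$. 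Hence $J' \supsetneq \m^{\ell+1}$ with $\mu(J') = \mu(\m^{\ell+1})$, showing $\m^{\ell+1}$ fails SRP.

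I expect the only delicate point to be the strict containment $J' \supsetneq \m^{\ell+1}$: the equality of generator numbers is a formal consequence of Lemma \ref{Full}, but one must ensure $J'$ is genuinely larger than $\m^{\ell+1}$, and this is precisely where the relation $\m^{\ell+1}\colon x = \m^{\ell}$ is needed to force $xu \notin \m^{\ell+1}$. Once the lemma is established, Proposition \ref{Border} follows at once, since the lemma shows that the set of $\ell$ for which $\m^{\ell}$ fails SRP is closed under $\ell \mapsto \ell+1$, so its complement is an initial segment $\{1,\dots,n\}$ with $n \in \mathbb{N} \cup \{\infty\}$.
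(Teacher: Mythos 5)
Your proposal is correct and follows essentially the same route as the paper: the paper's witness for $\m^{\ell+1}$ is $xI+\m^{\ell+1}=\m I$ (with $I$ the witness for $\m^{\ell}$), which is exactly your $J'=\m J$, and the strictness argument via $\m^{\ell+1}\colon x=\m^{\ell}$ and the generator count via condition $(2)$ of Lemma \ref{Full} coincide with the paper's. The only cosmetic difference is that the paper chooses $x$ with $x^{*}$ $G$-regular rather than invoking Ratliff--Rush closedness, but both yield the needed identity $\m^{j+1}\colon x=\m^{j}$.
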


%%% Proof of Lemma 3.13
\begin{proof}
Since $\depth G=1$, there exists an element $x \in \m \setminus \m^2$ so that $x^{*}=x + \m^2$ is $G$-regular. 
In particular, $\m^{\ell}$ and $\m^{\ell+1}$ are $\m$-full. 
By assumption and Lemma \ref{Full}, we can take an ideal 
$I \supsetneq \m^{\ell}$ so that $\m I = xI + \m^{\ell+1}$. 
Put $J = xI+\m^{\ell+1}$. Then 
\[
\m J = \m (x I + \m^{\ell+1})= \m (xI)+\m^{\ell+2}.  
\]
Moreover, we suppose that $J=\m^{\ell+1}$. 
Then $x I \subset \m^{\ell+1}$ and thus 
$I \subset \m^{\ell+1}\colon x=\m^{\ell}$. 
This contradicts the choice of $I$. 
Hence $J \supsetneq \m^{\ell+1}$ and $\mu(J)=\mu(\m^{\ell+1})$. 
This implies that $\m^{\ell+1}$ does not have SRP. 
\end{proof}

%%% Example 3.14
\begin{exam} \label{triple}
For any $n \ge 1$, there exists a triple 
$(a,b,c) \in \mathbb{N}^3$ such that 
$A=k[[s,t^a,t^b,t^c]]$ is a 
two-dimensional Cohen-Macaulay local domain such 
that $\m^{\ell}$ has the strong Rees property if and only if $1 \le \ell \le n$. 
\end{exam}

\begin{proof}
For a given $n \ge 1$, we can choose an integer 
$a=10^N > 2n$. 
Set $b=a+1$ and $c=(a-1)(a+1)-an$. 
Then $s,\,t^a,\,t^b,\,t^c$ is a minimal system of generators because $ab-a-b=(a-1)(a+1)-a \ge c (> b >a)$. 
\par 
Since 
\[
A=k[[s,x,y,z]]/(yz-x^{a+1-n},\,x^nz-y^{a-1},\,z^2-x^{a+1-2n}y^{a-2}),
\]
we get
\[
G=G(\m) \cong k[S,X,Y,Z]/(YZ,X^nZ,Z^2,Y^a). 
\]
Then $S$ is an $G$-regular and 
$\Soc(H_{\M}^0(G/SG))$ is generated by 
$\overline{X^{n-1}Z} \in [G/SG]_{n}$. 
Hence $\m^{\ell}$ has SRP if 
$1 \le \ell \le n$. 
\par 
If we put $I=(x^{n-1}z)+\m^{n+1} \supsetneq \m^{n+1}$, 
then $x \cdot x^{n-1}z = y^{a-1} \in 
\m^{a-1} \subseteq \m^{n+2}$. 
Similarly, we have that $y \cdot x^{n-1}z \in \m^{a} \subseteq \m^{n+2}$ and $z \cdot x^{n-1}z \in \m^{2a-n-2} \subset \m^{n+2}$.
Hence $\m I =(s \cdot x^{n-1}z)+\m^{n+2}$, and this implies 
that $\m^{n+1}$ does not have SRP. 
\end{proof}

\begin{exam} \label{semi-concrete}
Let $A=k[[s,t^4,t^5,t^{11}]]$ and $\m=(s,t^4,t^5,t^{11})$. 
Then $\m^2$ does \textit{not} have strong Rees property. 
In fact, $\m^2=(s^2,st^4,st^5,st^{11},t^8,t^9,t^{10})$ 
and thus $\mu(\m^2)=7$. 
If we put $I=(s^2,st^4,st^5,t^8,t^9,t^{10},t^{11}) = \m^2+(t^{11})$, then $I \supsetneq \m^2$ and $\mu(I)=7=\mu(\m^2)$. 
\par 
On the other hand, since $G \cong k[S,X,Y,Z]/(XZ,YZ,Z^2,Y^4)$ (see e.g. \cite[Section 2]{Sa3}), we have $\depth G=1$ and 
thus $\m^2=\widetilde{\m^2}$ is $\m$-full. 
Moreover, since $t^{11} \in \overline{\m^2} \setminus \m^2$, 
$\m^2$ is \textit{not} integrally closed.    
\end{exam}

\par
We can find an example of two-dimensional excellent normal 
local domains $(A,\m)$ for which $\m^2$ does not satisfy 
the strong Rees property and $A/sA \cong k[[t^4,t^5,t^{11}]]$ for some nonzero divisor $s$ of $A$. 
See the next section. 

%%%%%%%%%%%%%%%%%%%%%%%%%%%%%%%%%%%%%%%%%%%%%%%
\section{Point divisor on a smooth curve -- An example $\m^n$ does not have 
strong Rees property}

In this section we treat a class of normal graded rings of dimension $2$ 
and discuss whether $\m^n$ has the strong Rees property in such rings.

\begin{defn}\label{R_CPdefn} 
 Let $k$ be an algebraically closed field and $C$ be a smooth connected projective curve of genus $g$ over $k$.  We take a point $P\in C$ and 
define 
\[ 
H = H_{C,P} = \{ n\in \bbZ\;|\; h^0(C, \calO_C(nP))> h^0(C, \calO_C((n-1)P))\},
\]
where $h^i(C,\mathcal{F})=\dim_k H^i(C,\mathcal{F})$. 
It is easy to see that $H_{C,P}$ is an additive semigroup and $\bbN \setminus H_{C,P}$ has just $g$ elements. 
\par 
We define 
\[ 
R= R_H = R_{C,P} 
= \oplus_{n\ge 0} H^0(C, \calO_C(nP) T^n,
\]
as a subring of $k(C)[T]$, where $H^0(C,\calO_C(nP)
=\{ f\in k(C)\;|\; \Div_C(f)+nP\ge 0\}$.  Namely, $f \in H^0(C,\calO_C(nP)$, 
\ff $f$ has pole of order at most $n$ at $P$ and no other poles.
\end{defn}

\par %\vspace{2mm}
Then $R=R_{C,P}$ is a normal graded ring of dimension $2$ as treated in \cite{GW}, Chapter 5, \S 2.  
In the following, we fix $H=H_{C,P}$ and write $A=k[H]$ and $R= R_H$ so that $R/TR \cong A$, where $T = 1.T\in R_1$.
\par 
We write $H=\langle n_1,\ldots , n_e\rangle$ if 
$H= \{ \sum_{i=1}^e a_in_i\;|\; a_i \in \bbZ_{\ge 0}\; (i=1,\ldots, e)\}$.
In this case, we say that $H$ is generated by $e$ elements. 
We denote by $H_+$ the set of positive elements of $H$ and 
denote $n\in rH_+$ if $n= h_1+\ldots +h_r$ with $h_i\in H_+$ ($i=1,\ldots , r$).
 
\begin{rem}  
Given a semigroup $H$, sometimes there does not exist  
the pair $(C,P)$ such that $H_{C,P} = H$. 
But at least we know the existence 
of $(C,P)$ such that $H=H_{C,P}$ in the following cases (cf. \cite{Ko}, \cite{K-O}) ;
\begin{enumerate}
\item[$(1)$] $k[H]$ is a complete intersection.  
\item[$(2)$] $H$ is generated by $3$ elements.
\item[$(3)$] $H$ is generated by $4$ elements and $H$ is symmetric or pseudo-symmetric. 
\item[$(4)$] $g(H)\le 9$, where $g(H)$ is the number of positive integers not in $H$. 
\end{enumerate}  
\end{rem}    

\par  
We summarize some property of $R=R_{C,P}$. 
We put $\m = R_+$.

\begin{prop}\label{R_CP} 
Let $R=R_{C,P}$. 
An element of $R_n$ is denoted by $fT^n$, 
where $f\in k(C)$.  
We denote by $v(f)$ the order of the pole of $f$ at $P$. For non zero elements 
$f,g\in k(C)$, $v(fg)= v(f)+v(g)$.
\begin{enumerate}
\item $fT^n\in R_n$ if and only if $v(f)\le n$ and $f$ has no other poles on $C$.
\item Hence if $v(f)<n$, then $f T^n \in T^{n-v(f)} R$, because 
$f T^{v(f)}\in R_{v(f)}$.
\item If $H_{C,P} = \langle n_1,\ldots , n_e\rangle$, which are minimal generating system, then there are
elements $f_1,\ldots , f_e\in k(C)$ with $v(f_i) = n_i$
$(i=1,\ldots, e)$ such that 
$R=k[T, f_1T^{n_1},\ldots, f_eT^{n_e}]$.
\item  If $fT^n \in R_n$ and $v(f)=n$, 
then $fT^n\in \m^r$ if and only if $n\in rH_+$.  
\item $T\in R_1$ is a super regular element of $R$. Namely, 
if $T x\in \m^r$ for some $x\in R$, then $x\in \m^{r-1}$.
\end{enumerate}
\end{prop}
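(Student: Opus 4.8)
For (1) and (2) I would simply unwind the definition of $R_{C,P}$. In (1), the condition $fT^n\in R_n$ means $f\in H^0(C,\calO_C(nP))$, i.e. $\Div_C(f)+nP\ge 0$; reading this divisor inequality off at each point gives $\ord_P(f)\ge -n$ (that is, $v(f)\le n$) together with $\ord_Q(f)\ge 0$ for every $Q\ne P$ (that is, $f$ has no other poles). For (2), if $v(f)<n$ then (1) applied with $n$ replaced by $v(f)$ shows $fT^{v(f)}\in R_{v(f)}$, and since $n-v(f)>0$ we may factor $fT^n=T^{n-v(f)}\cdot(fT^{v(f)})\in T^{n-v(f)}R$. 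The only structural fact used is additivity $v(fg)=v(f)+v(g)$, which holds because $\ord_P$ is a discrete valuation.

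For (3) I would show by induction on $n$ that every homogeneous $gT^n\in R_n$ lies in $B=k[T,f_1T^{n_1},\ldots,f_eT^{n_e}]$, the base case being $R_0=k$. Fix $gT^n$ and split on $v(g)$. If $v(g)<n$, then by (2) $gT^n=T\cdot(gT^{n-1})$ with $gT^{n-1}\in R_{n-1}\subset B$ by induction, so $gT^n\in B$. If $v(g)=n$, then $n\in H$, hence $n=\sum_i a_in_i$ with $a_i\in\bbN$, and the monomial $g_0T^n:=\prod_i(f_iT^{n_i})^{a_i}\in B$ satisfies $v(g_0)=\sum_i a_in_i=n$ by additivity of $v$. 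Since $\dim_k H^0(C,\calO_C(nP))/H^0(C,\calO_C((n-1)P))\le 1$, the two pole-order-$n$ functions $g$ and $g_0$ agree up to a scalar modulo $H^0(C,\calO_C((n-1)P))$; thus $g=c\,g_0+g'$ with $c\in k$ and $v(g')<n$, so $gT^n=c\,g_0T^n+g'T^n$, and $g'T^n\in B$ by the first case. The crucial input is the one-dimensionality of the successive quotients of the $H^0(C,\calO_C(nP))$, which is exactly what makes the minimal semigroup generators suffice.

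The heart of the proposition is (4), and the pole-order bookkeeping is the main obstacle. For the implication $fT^n\in\m^r\Rightarrow n\in rH_+$, I would write $fT^n$ as a sum of products $\prod_{j=1}^r(p_jT^{d_j})$ with $p_jT^{d_j}\in R_{d_j}$, $d_j\ge 1$ and $\sum_j d_j=n$. By additivity $v(\prod_j p_j)=\sum_j v(p_j)\le\sum_j d_j=n$, and since poles cannot cancel, some summand must attain $v=n$; for that summand $v(p_j)\le d_j$ with $\sum_j v(p_j)=\sum_j d_j=n$ forces $v(p_j)=d_j\ge 1$ for every $j$, and each $v(p_j)\in H_+$, exhibiting $n=\sum_{j=1}^r v(p_j)\in rH_+$. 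Conversely, given $n=h_1+\cdots+h_r$ with $h_i\in H_+$, choosing $g_i$ with $v(g_i)=h_i$ produces the pole-order-$n$ element $\prod_i(g_iT^{h_i})\in\m^r$. The delicate point I expect to fight with is the interaction with strictly lower pole-order terms: the multiplication maps $H^0(C,\calO_C(h_iP))$ need not surject onto $H^0(C,\calO_C(nP))$, so one cannot naively conclude that \emph{every} pole-order-$n$ element lies in $\m^r$; the equivalence must be run purely at the level of the leading (pole-order-$n$) part, i.e.\ modulo $T\cdot R_{n-1}$.

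Finally, for (5) I would reduce to homogeneous $x=gT^n$, so that $Tx=gT^{n+1}$, and prove that $Tx\in\m^r$ forces $gT^n\in\m^{r-1}$; equivalently $TR\cap\m^r=T\m^{r-1}$, i.e.\ the initial form $T^{*}$ is a nonzerodivisor on the associated graded ring of $\m=R_+$. I would establish this degree by degree from the explicit description of $[\m^r]_n$ extracted in the analysis of (4): a leading-pole-order computation shows that dividing a product representation of $gT^{n+1}$ by $T$ drops the number of positive-degree factors by exactly one. As with (4), the only real difficulty is the careful pole-order accounting, and that is where I would concentrate the work.
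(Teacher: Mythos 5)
The paper states this proposition without proof (it is introduced only as a summary of properties of $R_{C,P}$), so your attempt must stand on its own. Parts (1)--(3) of your argument are correct and complete: the divisor computation, the factorization through $fT^{v(f)}$, and the induction on $n$ using $\dim_k H^0(C,\calO_C(nP))/H^0(C,\calO_C((n-1)P))\le 1$ all work. Your forward implication in (4) is also correct: since pole order of a sum never exceeds the maximum pole order of its terms, some product must attain $v=n$, and for that product $v(p_j)=d_j\in H_+$ for every factor, giving $n\in rH_+$.

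The ``delicate point'' you flag in the converse of (4) is, however, not merely delicate: it is fatal to the statement as literally written, so your proof cannot be completed without modifying the claim. Take $H=\langle 4,5,11\rangle$ (the semigroup of Example \ref{triple-notSRP}), $n=12=4+4+4\in 3H_+$, $r=3$. Since $12$ is the minimum of $3H_+$ and the only way to write $11$ as a sum of elements of $H$ is $11=11+0+\cdots$, a degree count shows that the only products of three positive-degree homogeneous elements of total degree $12$ attaining pole order $12$ come from $H^0(C,\calO_C(4P))^3=(k+kg)^3$ with $v(g)=4$, and no product attains pole order $11$; hence $[\m^3]_{12}=(H^0(C,\calO_C(10P))+kg^3)T^{12}$, which misses the pole-order-$11$ line. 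Therefore, for $h$ with $v(h)=11$, the element $f=g^3+h$ satisfies $v(f)=12\in 3H_+$ yet $fT^{12}\notin\m^3$. The correct assertion is only that $n\in rH_+$ if and only if \emph{some} $f$ with $v(f)=n$ has $fT^n\in\m^r$ (equivalently, the equivalence holds for the image of $fT^n$ in $R/TR\cong k[H]$); your product $\prod_i(g_iT^{h_i})$ proves exactly this existence version, which is all that the application in Theorem \ref{point-SRP} uses, since there $f$ may be chosen. Finally, (5) remains a plan rather than a proof: the reformulation $TR\cap\m^r=T\m^{r-1}$ is the right one, but the ``leading-pole-order computation'' must confront precisely the phenomenon above, namely sums of products each of maximal pole order $n+1$ whose leading terms cancel so that the sum lands in $TR$; showing that such a cancellation already lies in $T\m^{r-1}$ requires comparing distinct monomial decompositions of the same pole order and is the genuinely nontrivial part of (5). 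As written, neither the converse of (4) for arbitrary $f$ nor (5) is established.
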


\begin{thm} \label{point-SRP}
Let $R=R_{C,P}$ and $H=H_{C,P}=\langle n_1,\ldots , n_e\rangle$. 
If for some $n \in rH_+, n\not\in (r-1)H_+$, 
$n+n_i \in (r+2)H_+$ for $i=1,\ldots, e$, 
then $\m^{n+1}$ does not have the strong Rees property.
 \end{thm}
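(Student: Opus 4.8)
The plan is to witness the failure of the strong Rees property exactly as in Examples \ref{triple} and \ref{semi-concrete}: by producing a single element whose adjunction to a power of $\m$ enlarges the ideal without enlarging the minimal number of generators. Throughout I read the order hypothesis as $n\in rH_+\setminus(r+1)H_+$, so that $r$ is the \emph{largest} number of positive summands representing $n$; equivalently, by Proposition \ref{R_CP}(4), $r$ is the $\m$-adic order of any $fT^n\in R_n$ with $v(f)=n$. Note that $T$ is super-regular by Proposition \ref{R_CP}(5), so $T^{*}\in G_1$ is a nonzerodivisor and $\depth G\ge 1$. I would choose generators $f_iT^{n_i}$ with $v(f_i)=n_i$ as in Proposition \ref{R_CP}(3) and fix $fT^n\in R_n$ with $v(f)=n$; then $fT^n\in\m^{r}\setminus\m^{r+1}$.

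Next I would form $I=(fT^n)+\m^{r+1}$, so $I\supsetneq\m^{r+1}$, and compute $\m I$. Since $\m=(T,f_1T^{n_1},\dots,f_eT^{n_e})$, the module $\m\cdot fT^n$ is generated by $T\cdot fT^n=fT^{n+1}$ and by the products $f_iT^{n_i}\cdot fT^n=(ff_i)T^{n+n_i}$. For the latter, $v(ff_i)=n+n_i$ equals the $T$-degree, so Proposition \ref{R_CP}(4) applies and the hypothesis $n+n_i\in(r+2)H_+$ gives $(ff_i)T^{n+n_i}\in\m^{r+2}$ for every $i$. For the former, super-regularity forces $fT^{n+1}\in\m^{r+1}\setminus\m^{r+2}$: if $fT^{n+1}=T\cdot fT^n\in\m^{r+2}$ then $fT^n\in\m^{r+1}$, a contradiction. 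Hence $\m I=(fT^{n+1})+\m^{r+2}$. Conceptually this says the initial form $(fT^n)^{*}\in G_r$ is annihilated in $\overline{G}=G/T^{*}G$ by every degree-one generator while $T^{*}(fT^n)^{*}=(fT^{n+1})^{*}\ne 0$; thus $(fT^n)^{*}$ maps to a nonzero element of $[\Soc(H_{\M}^0(\overline{G}))]_r$, which is exactly the obstruction detected in Lemma \ref{Socle-Ini} and Proposition \ref{Upper}.

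I would then finish with Lemma \ref{Full}, taking $x=T$ and $\ell=r+1$, the hypothesis $\m^{r+2}\colon T=\m^{r+1}$ being precisely super-regularity. Since $TI+\m^{r+2}=(fT^{n+1})+T\m^{r+1}+\m^{r+2}=(fT^{n+1})+\m^{r+2}=\m I$, condition $(2)$ of Lemma \ref{Full} holds, so $\mu(I)=\mu(\m^{r+1})$ with $I\supsetneq\m^{r+1}$; hence $\m^{r+1}$ fails the strong Rees property. Finally, as $n\ge r$ (a sum of $r$ positive integers is at least $r$) and $\depth G=1$, Lemma \ref{not-SRP} propagates the failure from $\m^{r+1}$ to $\m^{\ell}$ for all $\ell\ge r+1$, in particular to $\m^{n+1}$, as asserted. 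Alternatively one may target $\m^{n+1}$ directly by adjoining the order-$n$ element $fT^{2n-r}=T^{n-r}\cdot fT^n$ and repeating the computation, since multiplying by $T^{n-r}$ raises every order by exactly $n-r$.

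The main obstacle is the bookkeeping separating the $T$-direction from the remaining generators. One must verify that each product $(ff_i)T^{n+n_i}$ genuinely lands in $\m^{r+2}$ (so that in $\overline{G}$ the element becomes a socle element) while $T\cdot fT^n$ does \emph{not} drop below order $r+1$ (so that $I$ is a strict enlargement and $(fT^n)^{*}\notin T^{*}G$). Both points rest on the fact that Proposition \ref{R_CP}(4) computes the order of a homogeneous $gT^m$ exactly when $v(g)=m$; the crux is that $v(ff_i)=n+n_i$ matches the $T$-degree whereas $v(f)=n<n+1$ does not, which is why super-regularity must be invoked for the $T$-product. The strictness $(fT^n)^{*}\notin T^{*}G$ in turn relies on $n\notin(r+1)H_+$, i.e. order exactly $r$: any competitor $gT^n$ with $v(g)<n$ satisfies $v(f-g)=n$, so $(f-g)T^n$ still has order $r$ and cannot lie in $\m^{r+1}$. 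This is the one place where the lower-order hypothesis on $n$ is indispensable.
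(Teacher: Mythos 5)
Your proof is correct and follows essentially the same route as the paper: both adjoin an element $fT^n$ with $v(f)=n$ of $\m$-adic order exactly $r$ to the next power of $\m$ and use the hypothesis $n+n_i\in(r+2)H_+$ together with super-regularity of $T$ to get $\m I=(fT^{n+1})+\m^{r+2}$ and hence $\mu(I)=\mu(\m^{r+1})$, the only difference being that the paper counts generators directly while you route the count through Lemma \ref{Full}. You also correctly repair the two index slips in the statement (the hypothesis must read $n\notin(r+1)H_+$, since $rH_+\subseteq(r-1)H_+$ makes the printed version vacuous, and the conclusion the paper actually proves concerns $\m^{r+1}$, as Example \ref{triple-notSRP} confirms), and your additional step via Lemma \ref{not-SRP} to reach $\m^{n+1}$ literally is sound.
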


\begin{proof} By the assumption, there is 
some $n\in (r-1)H_+, n\not\in rH_+$
such that $n + H_+ \subset (r+1)H_+$.  
Then we can take $f\in k(C)$ so that 
 $v(f) = n$ and $fT^n\in R_n$.  
Since $fT^n\not\in TR$, $fT^n \in \m^{r-1}$ 
and $fT^n\not\in \m^{r+1}$.  
We put $I = (\m^r, fT^n)$ and show that 
$\mu(I) = \mu(\m^r)$.  
Now, let homogeneous minimal generators of $\m$ be 
$\{T, g_1T^{n_1},\ldots , g_eT^{n_e}\}$.   
Then  among the homogeneous minimal generators 
of $\m (fT^n)$,  
$(fT^n)(g_iT^{n_i})\in \m^{r+1}$ by our 
assumption. 
Hence we can obtain minimal generating system of $I$ 
from that of $\m^r$, 
interchanging $fT^n$ and $fT^{n+1}$, obtaining 
$\mu(I) = \mu(\m^r)$.  
\end{proof}
 
\begin{cor}\label{SR_m^n}   
Let $R=R_{C,P}$,  $H=H_{C,P}$ and $\m= R_+$.  
Then the following conditions are equivalent$:$
\begin{enumerate}
 \item For all $n\ge 2$, $\m^n$ has the 
strong Rees property.
\item The associated graded ring of $k[H]$ 
with respect to $k[H]_{+}$ is Cohen-Macaulay.
\item The associated graded ring of $R$ with respect to $R_+$ is Cohen-Macaulay.  
\end{enumerate} 
\end{cor}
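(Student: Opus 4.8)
My strategy is to prove the equivalences via the cycle $(3)\Leftrightarrow(2)\Rightarrow(1)\Rightarrow(3)$, exploiting the super-regularity of $T\in R_1$ from Proposition \ref{R_CP}(5) together with Theorem \ref{point-SRP}. First I would dispose of $(2)\Leftrightarrow(3)$: since $R/TR\cong A=k[H]$ and $T$ is super-regular (hence its initial form $T^{*}$ is a nonzerodivisor on $G(R_+)$ with $G(R_+)/T^{*}G(R_+)\cong G(\m_{k[H]})$, the associated graded ring of $k[H]$ with respect to its maximal ideal $k[H]_+$), Cohen-Macaulayness of $G(R_+)$ is equivalent to that of $G(k[H]_+)$. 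This is the standard fact that regularity of a linear form on the associated graded ring lets one cut down and the quotient is again the associated graded of the quotient; I would cite the super-regularity in Proposition \ref{R_CP}(5) to justify that $T^{*}$ is $G(R_+)$-regular.

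Next I would establish $(3)\Rightarrow(1)$. Assuming $G=G(R_+)$ is Cohen-Macaulay of dimension $2$, it has $\depth G\ge 2$, so Corollary \ref{depth2} applies directly and gives that $\m^{\ell}$ has the strong Rees property for every $\ell\ge 1$, in particular for all $n\ge 2$. This is the cleanest implication since it is an immediate invocation of the main theorem.

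The substantive direction is $(1)\Rightarrow(3)$, which I would argue contrapositively using Theorem \ref{point-SRP}. Suppose $G(R_+)$ is not Cohen-Macaulay; since $T^{*}$ is $G$-regular and $\dim G=2$, $\depth G=1$, so $H^0_{\M}(\overline G)\ne 0$ where $\overline G=G/T^{*}G\cong G(k[H]_+)$. The non-Cohen-Macaulayness of $G(k[H]_+)$ is detected combinatorially: there exist an integer $r$ and an element $n\in rH_+$ with $n\notin(r-1)H_+$ whose initial form sits in the socle of the local cohomology — concretely, $n+H_+\subset(r+1)H_+$ while $n\notin rH_+$, which is exactly the numerical hypothesis of Theorem \ref{point-SRP}. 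I would translate the failure of the Cohen-Macaulay property of the numerical-semigroup associated graded ring into the existence of such a \emph{gap} element $n$, and then Theorem \ref{point-SRP} produces an ideal violating the strong Rees property for some $\m^{n+1}$ with $n+1\ge 2$.

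The main obstacle is precisely this last translation: making rigorous the claim that $G(k[H]_+)$ fails to be Cohen-Macaulay \emph{if and only if} there is a semigroup element $n\in rH_+\setminus(r-1)H_+$ with $n+H_+\subset(r+1)H_+$. One direction feeds Theorem \ref{point-SRP}, but for the full equivalence I must show that \emph{whenever} $G$ is not Cohen-Macaulay such an $n$ exists, i.e. that the socle of $H^0_{\M}(G(k[H]_+))$ is always represented by a semigroup element of this arithmetic type. I would handle this by identifying $[H^0_{\M}(\overline G)]_r$ with the cokernel of multiplication by a generic linear form on the $k[H]_+$-adic associated graded ring and checking that a nonzero socle class in degree $r$ forces some $n\in rH_+\setminus(r-1)H_+$ with all $n+n_i\in(r+1)H_+$; the numerical bookkeeping on the semigroup $H$ is routine once this homological identification is in place, so I would not grind through it here.
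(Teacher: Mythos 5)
The paper states this corollary without a written proof, and your three-step architecture --- $(2)\Leftrightarrow(3)$ via the super-regularity of $T$ from Proposition \ref{R_CP}(5), $(3)\Rightarrow(1)$ by invoking Corollary \ref{depth2}, and $(1)\Rightarrow(3)$ contrapositively through Theorem \ref{point-SRP} --- is exactly the route the surrounding results are set up to deliver, so the overall plan is sound. The first two implications are complete as you describe them.

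The step you defer, however, is the entire content of $(1)\Rightarrow(3)$, and as set up it would not go through. First, the index bookkeeping is inconsistent: since $rH_+\subseteq(r-1)H_+$ (absorb two summands into one), the condition ``$n\in rH_+$ and $n\notin(r-1)H_+$'' is vacuous --- you have copied a slip in the statement of Theorem \ref{point-SRP}, whose own proof uses the corrected hypothesis $n\in(r-1)H_+\setminus rH_+$ with $n+n_i\in(r+1)H_+$ for all $i$, concluding that $\m^{r}$ fails SRP; your third paragraph even asserts both $n\in rH_+$ and $n\notin rH_+$ in the same sentence. Second, the mechanism you propose for producing such an $n$ is wrong: $H^0_{\M}(\overline G)$ is not the cokernel of multiplication by a generic linear form. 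What you actually need is that a nonzero homogeneous socle element of $H^0_{\M}(G(k[H]_+))$ can be chosen to be the class of a single monomial $t^n$, and the reason is the finer $H$-grading: $k[H]=\bigoplus_{n\in H}kt^n$ and $(k[H]_+)^r=\bigoplus_{n\in rH_+}kt^n$, so every graded piece of $G(k[H]_+)$ splits into one-dimensional $H$-homogeneous summands and the socle is spanned by monomial classes. For such a class, ``killed by $G_1$'' translates verbatim into $n\in(r-1)H_+\setminus rH_+$ and $n+n_i\in(r+1)H_+$ for all generators $n_i$ (whence $n+H_+\subseteq(r+1)H_+$), with $r\ge 2$ because the degree-zero piece contains no socle element. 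Once this is said, non-Cohen--Macaulayness of $G(k[H]_+)$ feeds Theorem \ref{point-SRP} and the argument closes; without it there is a real gap, since a priori a socle element is only a linear combination of monomials.
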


\begin{exam}  \label{triple-notSRP}
Let $H=\langle 4,5,11\rangle$, 
$C$ a smooth curve of genus $5$ such that 
there is a point $P$ with 
$H_{C,P} = \langle 4,5,11\rangle$.
We put $R= R_{C,P}$ and $\m= R_{+}$.  
Since $11+4 \in 3H_+$ and $11 + 5 \in 4 H_+$, 
we see that $\m^2$ does \textit{not} have the strong Rees property. 
In this example, we can easily see that 
$\m^n$ is integrally closed for all $n\ge 2$.
\end{exam} 

\begin{rem} \label{semi-CM}
For $3$ generated semigroup 
$H=\langle a,b,c \rangle$,
we know when the associated graded ring is Cohen-Macaulay
(cf, \cite{He},\cite{LV}).
\end{rem}
%%%%%%%%%%%%%%%%%%%%%%%%%%%%%%%%%%%%%%%%%%%%%%%%%%%
%%%%%%%%%%%%%%%%%%%%%%%%%%%%%%%%%%%%%%%%%%%%%%%%%%%
\section{Takahashi-Dao's question}

Dao and Takahashi \cite{DT} gave two upper bounds of 
the dimension of the singularity category 
$\dim D_{\mathrm{sg}}(A)$:
\begin{eqnarray*}
\dim D_{\mathrm{sg}}(A) & \le & (\mu(I)-\dim A+1)\ell\ell(I)-1 \\[2mm]
\dim D_{\mathrm{sg}}(A) &\le & e(I)-1
\end{eqnarray*}
for any $\m$-primary ideal $I$ contained in the sum $\mathcal{N}^A$ of the 
Noether differents of $A$. 
They posed the following question. 

\begin{quest}[\textrm{Takahashi-Dao}] \label{T-Dao}
Let $(A,\m)$ be a $d$-dimensional Cohen-Macaulay local ring. 
Let $I \subset A$ be an $\m$-primary 
$($integrally closed$)$ ideal.  
When does an inequality 
\[
(d-1)!(\mu(I)-d+1)\cdot \ell\ell_A(I) \ge e(I)
\]
hold true $($cf. \cite{DS}$)$? 
\end{quest}

\par 
The following theorem is motivated by the question as above. 
In fact, $(1) \Rightarrow (2)$ is due to Dao and Smirnov, and $(2) \Rightarrow (1)$ is also proved by them independently.

\begin{thm} \label{Dao}
Let $(A,\m)$ be a two-dimensional excellent normal local domain containing $k=\overline{k}\cong A/\m$.
Then the following conditions are equivalent$:$
\begin{enumerate}
\item[$(1)$] $\m$ is a $p_g$-ideal. 
\item[$(2)$] For every $\m$-primary integrally closed ideal $I$, 
\begin{equation}
(\mu(I)-1) \cdot \ell\ell_A(I) \ge e(I) \tag{$*$}
\end{equation}
holds true. 
\item[$(3)$] For any power $I=\m^{\ell}$ of $\m$, the inequality $(*)$ 
holds true.   
\end{enumerate}
\end{thm}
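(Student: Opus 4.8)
The plan is to close the cycle by reducing all three conditions to a single numerical identity, $\mu(\m^{\ell})=\ell\cdot e(\m)+1$, and then to the geometry of $p_g$-cycles. Since Dao and Smirnov already give $(1)\Leftrightarrow(2)$, it suffices to prove $(1)\Rightarrow(3)$ and $(3)\Rightarrow(1)$; together with the cited equivalence this yields $(1)\Leftrightarrow(2)\Leftrightarrow(3)$. Throughout I will use that a two-dimensional normal domain is Cohen-Macaulay (Serre's criterion), so $\depth A=2$ and minimal reductions exist after enlarging the residue field if necessary.

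For $(1)\Rightarrow(3)$ I would argue as follows. If $\m$ is a $p_g$-ideal, then by Proposition \ref{pg-chara} the Rees algebra $\calR(\m)$ is Cohen-Macaulay, hence so is $G=G(\m)$, and moreover $\m^2=Q\m$ for a minimal reduction $Q$. A Cohen-Macaulay associated graded ring of reduction number one has linear Hilbert function, so $\mu(\m^{\ell})=\dim_k[G]_{\ell}=\ell\cdot e(\m)+1$ for every $\ell\ge 1$; in particular $\depth G\ge 2$, so every $\m^{\ell}$ has the strong Rees property by Corollary \ref{depth2}, which is what ties this theorem to the main result. Substituting $\ell\ell_A(\m^{\ell})=\ell$ and $e(\m^{\ell})=\ell^2 e(\m)$ into $(*)$ turns it into the equality $(\ell\,e(\m))\cdot\ell=\ell^2 e(\m)$, so $(*)$ holds for every power of $\m$.

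For $(3)\Rightarrow(1)$, the crucial point is that $(*)$ is extremely rigid for the maximal ideal itself. Evaluating $(3)$ at $\ell=1$, where $\ell\ell_A(\m)=1$ and $e(\m^1)=e(\m)$, the hypothesis reads $\mu(\m)-1\ge e(\m)$, i.e. $\mu(\m)\ge e(\m)+1$. On the other hand Abhyankar's inequality gives $\mu(\m)\le e(\m)+d-1=e(\m)+1$ in our setting, so $\mu(\m)=e(\m)+1$; that is, $A$ has minimal multiplicity, which for a Cohen-Macaulay local ring is equivalent to $\m^2=Q\m$ for a minimal reduction $Q$. (More generally, cutting by a superficial element $x$ and summing the inequalities $\mu(\m^{\ell})-\mu(\m^{\ell-1})\le e(\m)$, which come from the one-dimensional Cohen-Macaulay ring $A/xA$, yields the uniform bound $\mu(\m^{\ell})\le \ell\,e(\m)+1$; thus $(3)$ in fact forces the full linear Hilbert function and reduction number one for $G$.)

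It remains to upgrade \emph{minimal multiplicity} to \emph{$\m$ is a $p_g$-ideal}, and I expect this to be the main obstacle. By Proposition \ref{pg-chara} it would suffice to know, in addition to $\m^2=Q\m$, that every power $\m^n$ is integrally closed (equivalently that $\calR(\m)$ is normal), but the reduction-number-one computation does not by itself deliver normality of the powers. To bridge this gap I would invoke the geometric theory of Okuma-Watanabe-Yoshida: represent the integrally closed ideal $\m$ on a resolution $f\colon X\to\Spec A$ as $\m\calO_X=\calO_X(-Z)$ with $Z$ anti-nef, so that $e(\m)=-Z^2$, and use their formula relating $\mu(\m)$, $e(\m)$ and the correction term $q(\m)=\dim_k H^1(\calO_X(-Z))$ to conclude that minimal multiplicity $\mu(\m)=e(\m)+1$ forces $q(\m)=0$. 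Combined with the inequality $\dim_k H^0(\calO_X(-Z))\le p_g(A)$ quoted before Definition \ref{PG-def}, this pins down $\dim_k H^0(\calO_X(-Z))=p_g(A)$ with $\calO_X(-Z)$ generated, so $Z$ is a $p_g$-cycle and $\m$ is a $p_g$-ideal by Definition \ref{PG-def}. Here the hypotheses that $A$ be excellent normal and contain the algebraically closed field $k\cong A/\m$ are exactly what make the resolution and this cohomological computation available, and it is this passage from a purely numerical condition to the vanishing of $q(\m)$ that carries the real content of $(3)\Rightarrow(1)$.
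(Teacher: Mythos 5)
Your $(1)\Rightarrow(3)$ is fine and is essentially the paper's computation specialized to powers of $\m$: a $p_g$-ideal $\m$ has Cohen--Macaulay normal Rees algebra and reduction number one, so $\mu(\m^{\ell})=\ell e(\m)+1$, $\LL(\m^{\ell})=\ell$, $e(\m^{\ell})=\ell^{2}e(\m)$, and $(*)$ holds with equality. The genuine gap is in $(3)\Rightarrow(1)$, at exactly the point you flag as the main obstacle. From the case $\ell=1$ you correctly obtain $\mu(\m)=e(\m)+1$, hence $\m^{2}=Q\m$, and you correctly note that the missing ingredient is the integral closedness of all powers $\m^{n}$. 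But the bridge you propose --- that minimal multiplicity forces $q(\m)=\dim_{k}H^{1}(\calO_X(-M))=0$, which together with the bound by $p_g(A)$ makes $M$ a $p_g$-cycle --- is both logically off and false. A $p_g$-cycle requires $\dim_{k}H^{1}(\calO_X(-M))=p_g(A)$ (the $H^{0}$ in the sentence before Definition \ref{PG-def} is a typo for $H^{1}$), so $q(\m)=0$ could only help when $p_g(A)=0$; and minimal multiplicity simply does not imply that $\m$ is a $p_g$-ideal. For example, $A=k[[x,y,z]]/(x^{2}+y^{4}+z^{4})$ is a two-dimensional excellent normal hypersurface with $e(\m)=2$ and $\mu(\m)=3=e(\m)+1$, yet $x^{2}=-(y^{4}+z^{4})\in(\m^{2})^{2}$ shows $x\in\overline{\m^{2}}\setminus\m^{2}$, so $\m$ is not a normal ideal and hence not a $p_g$-ideal by Proposition \ref{pg-chara}. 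Consequently no argument using only the $\ell=1$ instance of $(*)$ can prove $(3)\Rightarrow(1)$.

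What the paper actually does is use the inequality for \emph{every} $\ell$, applied to the integrally closed ideals $\overline{\m^{\ell}}$: since $\LL(\overline{\m^{\ell}})\le\ell$ and $e(\overline{\m^{\ell}})=\ell^{2}e$, one gets $\mu(\overline{\m^{\ell}})\ge\ell e+1$. Minimal multiplicity makes $G(\m)$ Cohen--Macaulay by Sally's theorem, so $\mu(\m^{\ell})=\ell e+1$ and $\m^{\ell}$ is $\m$-full, whence $\mu(\overline{\m^{\ell}})\le\mu(\m^{\ell})$ by the ordinary Rees property; therefore $\mu(\overline{\m^{\ell}})=\mu(\m^{\ell})$. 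The decisive step --- and the reason this theorem lives in this paper --- is that the \emph{strong} Rees property of $\m^{\ell}$ (Corollary \ref{grCM}, available because $G$ is Cohen--Macaulay of dimension $2$) then forces $\overline{\m^{\ell}}=\m^{\ell}$; normality of $\m$ together with $\m^{2}=Q\m$ gives $(1)$ via Proposition \ref{pg-chara}. Your proposal bypasses Theorem \ref{Strong} entirely, and that is precisely what is missing. Note also that this argument requires the inequality $(*)$ for the ideals $\overline{\m^{\ell}}$ rather than for $\m^{\ell}$ literally (in the example above $(*)$ does hold for every $I=\m^{\ell}$ even though $\m$ is not a $p_g$-ideal), which is how condition $(3)$ is used in the paper's proof.
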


\begin{proof}
$(1) \Longrightarrow (2):$ 
We give a sketch of proof here for the sake of the completeness. 
Assume that $\m$ is a $p_g$-ideal. 
Let $I \subset A$ be an $\m$-primary integrally closed ideal. 
Then there exists a resolution of singularities 
$f \colon X \to \Spec A$ so that $I\mathcal{O}_X = \mathcal{O}_X(-Z)$ for some anti-nef cycle $Z$ on $X$. 
By \cite[Theorem 6.1]{OWY1}, we have 
\[
\mu(I)=-MZ+1,
\]
where $M$ is an anti-nef cycle on $X$ so that $\m \mathcal{O}_X=\mathcal{O}_X(-M)$. 
\par 
Put $r=\ell\ell_A(I)$, that is, $\m^r \subset I$ and $\m^{r-1} \not \subset I$. 
Then $\overline{\m^{r}} \subset \overline{I}=I$. Thus $rM \ge Z$. 
Since $e(I)=-Z^2$, we have 
\[
(\mu(I)-1)\ell\ell_A(I) -e(I)=(-MZ)r +Z^2 = -(rM-Z)Z \ge 0,
\]
as required. 
\par \vspace{2mm} \par \noindent
$(2) \Longrightarrow (3):$ Trivial. 
\par \vspace{2mm} \par \noindent
$(3) \Longrightarrow (1):$ Now assume that inequalities 
\[
(\mu(\overline{\m^{\ell}})-1)\cdot
\ell\ell_A(\overline{\m^{\ell}})
\ge e(\overline{\m^{\ell}})
\]
hold true for all integers $\ell \ge 1$.
This shows that 
\[
(\mu(\overline{\m^{\ell}})-1)\cdot \ell \ge 
e(\overline{\m^{\ell}})=e(\m^{\ell})=\ell^2 \cdot e,
\]
where $e=e(\m)$. 
Hence $\mu(\overline{\m^{\ell}})\ge \ell e +1$. 
\par 
In the case of $\ell=1$, we have that 
$\mu(\m)-1 \ge e(\m)$. 
On the other hand, Abhyankar's inequality implies that 
$\mu(\m)-1 \le e(\m)$, and thus equality holds true.  
That is, $A$ has maximal embedding dimension in the sense 
of Sally (\cite{Sa2}). 
Then $\mu(\m^{\ell})= \ell e +1$. 
Moreover, since $G=G(\m)$ is Cohen-Macaulay (\cite{Sa1}), 
we obtain that $\m^{\ell}=\widetilde{\m^{\ell}}$ is 
$\m$-full for every $\ell \ge 1$. 
Then $\mu(\overline{\m^{\ell}}) \le \mu(\m^{\ell})=\ell e+1$
by Rees property of $\m$-full ideals. 
%; see \cite[Theorem 3]{JW}. 
Hence $\mu(\overline{\m^{\ell}})=\mu(\m^{\ell})$. 
Then Corollary  \ref{grCM} yields that 
$\overline{\m^{\ell}}=\m^{\ell}$ because $G(\m)$ 
is Cohen-Macaulay in our case (Sally \cite{Sa1}). 
Therefore $\m$ is a $p_g$-ideal by \cite{OWY2}. 
\end{proof}

\begin{rem}
We can show that $I=\m^{\ell}$ satisfies the above inequality if $(A,\m)$ has maximal embedding dimension. 
Note that $\m^{\ell}$ does \textit{not} necessarily have 
the strong Rees property; see Sections 3 and 4. 
\end{rem} 

\par \vspace{2mm}
It is known that the maximal ideal $\m$  
of any two-dimensional rational singularity 
is a $p_g$-ideal. 
So it is natural to ask the following question. 
Which ring the maximal ideal of which is a $p_g$-ideal?
By a similar argument as in \cite[Corollary 11.4]{GTT}, 
we can show the following. 
Notice that this gives a slight generalization of the fact 
that any two-dimensional rational singularity is an 
almost Gorenstein local ring. 

\begin{prop} \label{pg-agr}
Let $(A,\m)$ be a two-dimensional excellent normal local domain containing an algebraically closed field. 
Let $K_A$ denote the canonical module of $A$. 
If $\m$ is a $p_g$-ideal, then $A$ is an almost Gorenstein local ring in the sense of \cite{GTT}. 
That is, there exists a short exact sequence of $A$-modules$:$ 
\[
0 \to A \to K_A \to C \to 0
\]
such that $\m C=x C$ for some regular element $x$ over $C$.   
\end{prop}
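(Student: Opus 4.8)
The plan is to exhibit the almost Gorenstein structure by realizing $K_A$ through a resolution of singularities and then splitting off a single canonical section, in parallel with the argument of \cite[Corollary 11.4]{GTT}.

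First I would collect the consequences of the hypothesis from Proposition \ref{pg-chara}: the Rees algebra $\calR = \calR(\m)$ is a Cohen--Macaulay normal domain, $\ov{\m^n} = \m^n$ for all $n \ge 1$, and $\m^2 = Q\m$ for some minimal reduction $Q = (a,b) \subseteq \m$. Passing to the extended Rees algebra shows that the associated graded ring $G = G(\m)$ is then also Cohen--Macaulay, and since the reduction number is one its $a$-invariant is $-1$. Geometrically, fixing a resolution $f \colon X \to \Spec A$ on which $\m$ is represented, $\m\calO_X = \calO_X(-M)$ for the $p_g$-cycle $M$, and the $p_g$-condition is equivalent (\cite{OWY1,OWY2}) to the cohomological vanishing $H^1(X, \calO_X(-nM)) = 0$ for every $n \ge 1$. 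This vanishing is the essential input.

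Next I would realize the canonical module as $K_A = H^0(X, \omega_X)$; this is valid because $A$ is excellent normal and $R^1 f_* \omega_X = 0$ by Grauert--Riemenschneider. A nonzero $\xi \in K_A$ gives an injection $A \xrightarrow{\cdot \xi} K_A$ with $A\xi \cong A$, and since $K_A$ has rank one the cokernel $C = K_A/A\xi$ is a torsion module, hence automatically of dimension $\le 1$. The crux of this step is to choose $\xi$ so that $C$ has dimension exactly one and is Cohen--Macaulay: a generic minimal generator makes $C$ of finite length, so $\xi$ must instead be taken to vanish along a height-one locus, i.e. as a section of $\omega_X$ arising from the twist by $\calO_X(-M)$. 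Granting such a choice, the vanishing $H^1(\calO_X(-M)) = 0$ forces $H^0_{\m}(C) = 0$, so that $C$ is a one-dimensional Cohen--Macaulay $A$-module.

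The remaining and decisive step is the Ulrich condition $\m C = xC$, which I expect to be the main obstacle. Taking $x$ to be the image in $C$ of a superficial generator of the minimal reduction $Q$, I would translate $\m C = xC$ into the reduction-number-one statement $\m^{n+1}C = x^{n}\m C$ for all $n \ge 0$: here the identity $\m^2 = Q\m$ controls the powers of $\m$, while the vanishing $H^1(\calO_X(-nM)) = 0$ for all $n$ controls the corresponding graded pieces of $C$, and together they should yield the required equality and make $x$ a nonzerodivisor on the Cohen--Macaulay module $C$. The delicate point throughout is that the single section $\xi$ must simultaneously produce a Cohen--Macaulay cokernel and the sharp equality $\m C = xC$, and it is exactly here that the full strength of the $p_g$-ideal hypothesis enters rather than mere Cohen--Macaulayness of $G$; this mirrors the role of rational-singularity vanishing in \cite[Corollary 11.4]{GTT}. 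Once the sequence $0 \to A \xrightarrow{\cdot \xi} K_A \to C \to 0$ with $C$ Cohen--Macaulay of dimension one and $\m C = xC$ is in hand, $A$ is almost Gorenstein by definition.
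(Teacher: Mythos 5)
The paper itself contains no written proof of Proposition \ref{pg-agr}; it only records that the statement follows ``by a similar argument as in \cite[Corollary 11.4]{GTT}'', so there is nothing to compare your argument against line by line. Judged on its own terms, however, your proposal has a genuine error at precisely the point you call the essential input. You assert that the $p_g$-ideal condition is equivalent to the vanishing $H^1(X,\calO_X(-nM))=0$ for all $n\ge 1$. It is not: by Definition \ref{PG-def} (following \cite{OWY1}) a $p_g$-cycle is one for which $\calO_X(-M)$ is generated and the relevant cohomological invariant (in \cite{OWY1} this is $\ell_A(H^1(X,\calO_X(-M)))$, which is bounded above by $p_g(A)$ when there is no fixed component) attains its \emph{maximal} value $p_g(A)$ --- not its minimal value $0$. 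The two conditions coincide only when $p_g(A)=0$, i.e.\ for rational singularities, which is exactly the case already settled in \cite[Corollary 11.4]{GTT} and exactly the case the proposition is meant to go beyond (the paper says so explicitly just before the statement). In the new case $p_g(A)>0$ your vanishing is false, and the two places where you invoke it --- to force $H^0_{\m}(C)=0$ and to control the graded pieces in the reduction computation --- collapse. A related slip: when $p_g(A)>0$ one has $K_A\neq H^0(X,\omega_X)$; rather $f_*\omega_X\subsetneq K_A$ with quotient of length $p_g(A)$, so the identification you start from is likewise only valid in the rational case.

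Independently of this, the decisive step is not actually carried out. The construction of the section $\xi$ whose cokernel $C$ is one-dimensional Cohen--Macaulay, and the verification of the Ulrich equality $\m C=xC$, are exactly where the content of the proposition lies, and your text leaves both at the level of ``granting such a choice'' and ``should yield the required equality''. To repair the argument you would need to replace the false vanishing by the actual consequences of Proposition \ref{pg-chara} --- normality and Cohen--Macaulayness of $\calR(\m)$ and the reduction-number-one identity $\m^2=Q\m$ --- together with the formula $\mu(I)=-MZ+1$ from \cite[Theorem 6.1]{OWY1} used in the proof of Theorem \ref{Dao}, and then run the Goto--Takahashi--Taniguchi argument with those inputs; as written, the proposal neither does this nor supplies a substitute.
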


\begin{exam} \label{Point-pg}
In the notation of Section 4, let $P\in C$ be such that 
$H_{C,P} = \{ 0, g+1,g+2,\ldots\}$.  
Then the maximal ideal $\m$ of 
$R_{C,P}$ is a $p_g$-ideal.    
\end{exam}

\begin{proof}  Take $f\in k(C)$ with $fT^{g+1} \in R$ and $v(f)= g+1$.
Then putting $Q=(T, fT^{g+1})$, we see that $\m^2=Q\m$. \par
Introduce a valuation $w$ on $R$ such that 
\[w(fT^m) =(m-v(f)) + \left[\dfrac{v(f)}{g+1} \right]\] 
To show that $\m^n$ is integrally closed it suffices to show that 
 $w(gT^m)\ge n$ if and only if $gT^m \in \m^n$.  If  $w(gT^m)\ge n$ and 
 $m - v(g) = r$, since $v(g) \ge (g+1)(n-r)$,  $g T^{v(g)}\in \m^{n-r}$ 
 and then  $gT^m \in \m^n$.\par
 Hence $\m^n$ is integrally closed for all $n\ge 1$ and $\m$ is a 
 $p_g$-ideal by   Proposition \ref{pg-chara}.
\end{proof}

In dimension $2$, although Takahashi-Dao's inequality does not hold for 
general normal ring $A$, we have an inequality adding a constant 
depending on $A$.  Also we have converse inequality for $e(I)$ changing 
$\LL(A)$ to $\ord(I)$.

\begin{prop}\label{Ineq_e(I)} 
Let $(A,\m)$ be an excellent normal
 local ring as in Theorem 
\ref{Dao} and $I$ be an integrally closed ideal in $A$. Then we have the 
following inequalities;
\begin{enumerate}
\item If we put $c = p_g(A) - \ell_A(H^1(X,\calO_X(-M)))$, then we have
 an inequality   
\[
(\mu(I)-1+c) \cdot \LL(I) \ge e(I).
\]
Note that $c$ is an invariant depending only on $A$.
\item If we change $\LL(I)$ to $\ord(I)$, we have the converse inequality;
\[
e(I) \ge (\mu(I) - 1) \cdot \ord(I).
\]
\end{enumerate}
\end{prop}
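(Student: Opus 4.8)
The plan is to translate both inequalities into intersection numbers on a resolution, exactly as in the proof of $(1)\Rightarrow(2)$ of Theorem \ref{Dao}, and to replace the exact formula $\mu(I)=-MZ+1$ (available only when $\m$ is a $p_g$-ideal) by a two-sided estimate whose defect is governed by $c$. First I would fix a resolution $f\colon X\to \Spec A$ on which both $I$ and $\m$ are represented, say $I\calO_X=\calO_X(-Z)$ and $\m\calO_X=\calO_X(-M)$ with $Z,M$ anti-nef. I will use the standard dictionary $e(I)=-Z^2$ and the comparison of cycles coming from integral closedness: since $\m^{\LL(I)}\subseteq I\subseteq \m^{\ord(I)}$ and $I=\overline{I}$, passing to integral closures (so that $\overline{\m^{\LL(I)}}\subseteq I\subseteq \overline{\m^{\ord(I)}}$) and comparing the representing cycles yields
\[
\ord(I)\cdot M \;\le\; Z \;\le\; \LL(I)\cdot M .
\]

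The heart of the matter is the number-of-generators estimate
\[
-MZ+1-c \;\le\; \mu(I)\;\le\; -MZ+1, \qquad c=p_g(A)-\ell_A(H^1(X,\calO_X(-M))).
\]
The upper bound generalizes Abhyankar's inequality (for $I=\m$ one has $Z=M$ and it reads $\mu(\m)\le e(\m)+1$), while the lower bound measures how far $\m$ is from being a $p_g$-ideal: when $c=0$ the two bounds collapse to the equality $\mu(I)=-MZ+1$ of \cite[Theorem 6.1]{OWY1}. To prove the estimate I would run the cohomology of
\[
0\to \calO_X(-Z-M)\to \calO_X(-Z)\to \calO_X(-Z)\otimes\calO_M\to 0,
\]
combine it with $\overline{\m I}=H^0(\calO_X(-Z-M))$ and the decomposition $\mu(I)=\ell_A(I/\overline{\m I})+\ell_A(\overline{\m I}/\m I)$, and bound the cohomological corrections $h^1(\calO_X(-Z))$ and $h^1(\calO_X(-Z-M))$ against $p_g(A)$ and $q(\m)=\ell_A(H^1(X,\calO_X(-M)))$ via the OWY inequalities $q(\,\cdot\,)\le p_g(A)$. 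Controlling the defect by $c=p_g(A)-q(\m)$ \emph{uniformly in $I$} is the step I expect to be the main obstacle, since it requires estimating the non-cohomological term $\ell_A(\overline{\m I}/\m I)$, which vanishes precisely in the $p_g$-situation.

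Granting the estimate, both inequalities follow formally from the anti-nefness of $Z$. For $(1)$ I would multiply the lower bound by $\LL(I)\ge 0$ and use $\LL(I)M-Z\ge 0$ together with $Z\cdot(\LL(I)M-Z)\le 0$, obtaining
\[
(\mu(I)-1+c)\cdot\LL(I)\;\ge\;-(\LL(I)M)\cdot Z\;\ge\;-Z^2=e(I).
\]
For $(2)$ I would multiply the upper bound by $\ord(I)\ge 0$ and use $Z-\ord(I)M\ge 0$ with $Z\cdot(Z-\ord(I)M)\le 0$, obtaining
\[
(\mu(I)-1)\cdot\ord(I)\;\le\;-(\ord(I)M)\cdot Z\;\le\;-Z^2=e(I).
\]
Finally I would record that $c\ge 0$ by the bound $q(\m)\le p_g(A)$, and that $c$ depends only on $A$, since both $p_g(A)$ and $q(\m)$ are invariants of $A$.
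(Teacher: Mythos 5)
Your proposal follows essentially the same route as the paper: represent $I$ and $\m$ by anti-nef cycles $Z$ and $M$ on a common resolution, use $e(I)=-Z^2$ together with $\ord(I)\cdot M\le Z\le \LL(I)\cdot M$, and feed a two-sided bound $-MZ+1-c\le\mu(I)\le -MZ+1$ into the anti-nefness of $Z$; your formal derivations of (1) and (2) coincide with the paper's. The one step you flag as the main obstacle --- establishing $\mu(I)\ge -MZ+1-c$ with $c=p_g(A)-\ell_A(H^1(X,\calO_X(-M)))$ uniformly in $I$ --- is precisely what the paper quotes from \cite[Theorem 6.1]{OWY1}, so it need not be re-derived; your sketched cohomological argument is in the spirit of that reference, but citing it closes the only gap in your write-up.
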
 

\begin{proof}  
We modify our argument in the proof of Theorem \ref{Dao}.
\par
(1) In the situation of the proof of Theorem \ref{Dao}, 
by \cite[Theorem 6.1]{OWY1}, we have $\mu(I) \ge -MZ+1-c$ and 
then the argument is the same as Theorem \ref{Dao}.\par
(2) Since $I \subset \m^{\ord(I)}$, we have $Z \ge \ord(I)M$ and hence 
$-Z^2 \ge -\ord(I) MZ$ and $-MZ \ge \mu(I) -1$.  
\end{proof}

\begin{exam}  \label{Hyp-Ineq}
Put $A = k[[X,Y,Z]]/(f)$, where $f$ is homogeneous of degree 
$n\ge 3$ and we assume $A$ is normal. Then $c = 
p_g(A) - \ell_A(H^1(X,\calO_X(-M))) = \binom{n-1}{2}$
in this case.  In fact, if we take $I=\m^s$ with $s\ge n$, then 
we have $\LL(I) = s, \mu(I) = sn - (n-3)n/2$ and we see that $c = \binom{n-1}{2}$ is best possible.
\end{exam}

%\begin{rem}
%Shall we say something about CM version of $e(I) \ge %(\mu(I) - 1) \cdot \ord(I)$ ?
%\end{rem}

%\begin{quest} 
%\begin{rem}
In dimension $\ge 3$, 
%KW0721 there does not exist any constant $c$ 
we can take $A$ so that there is no constant $c$ 
for which the  inequality 
\[
(d-1)!(\mu(I)-d+1+c)\cdot \ell\ell(I) \ge e(I)
\]
hold for all integrally closed ideal $I$. 
See the next example. 
%\end{rem}
%\end{quest}

\begin{exam} \label{Hyp-const}
Let $A =k[[x,y,z,w]]/(f)$, where $f$ is a homogeneous polynomial 
of degree $n$ and we assume $A$ is normal. Then we can see if $n\ge 5$,
putting $I = \m^s$,  to have inequality 
$(d-1)!(\mu(I)-d+1+c)\cdot \ell\ell(I) \ge e(I)$ holds
we have to take $c$ arbitrary large when $s$ tends to infinity.
\end{exam}

%%%%%%%%%%%%%
\section{What Ideals have SRP ?}

We have shown under certain condition, 
$\m^n$ has SRP and also in regular local rings 
of dimension $2$, 
$\m^n$ ($n\ge 1$) are only ideals with SRP in 
Example \ref{RLR-SRP}. 
In this section, we ask if that this property 
characterize regular local rings and some Veronese subrings in dimension $2$.     
We get a partial result for rational singularities.

\begin{prop} \label{Only}
Let $(A,\m)$ be a two-dimensional rational singularity and 
assume that $A/\m$ is algebraically closed. 
Then we have the following 
results concerning the strong Rees property for integral closed ideals.
\begin{enumerate}
\item If an integrally closed ideal $I$ has the strong Rees property, then $I$ is a good ideal 
in the sense of \cite{GIW}.
\item If the minimal resolution of 
$\Spec(A)$ has more than two exceptional curves, 
then there are integrally closed ideals $I\ne \m^n$%KW0728
 with the property such that $\mu(I') < \mu(I)$ for every 
 integrally closed ideal $I'$ strictly containing $I$.  
%\item If $A$ is not regular, then there exists and $\m$-%primary ideal 
%$I\ne \m^n$ which has SRP.
\end{enumerate}
\end{prop}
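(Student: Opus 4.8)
The plan is to pass to a resolution of singularities and argue entirely with anti-nef cycles. Since $A$ is a two-dimensional rational singularity, every $\m$-primary integrally closed ideal is a $p_g$-ideal (as noted after Definition \ref{PG-def}), so it has the form $I=I_Z:=H^0(\calO_X(-Z))$ for an anti-nef cycle $Z$ on a resolution $f\colon X\to \Spec A$ on which it is represented, and $\m\calO_X=\calO_X(-M)$ for the fundamental cycle $M$. Because $\m$ itself is a $p_g$-ideal, \cite[Theorem 6.1]{OWY1} gives $\mu(I_Z)=-MZ+1$, and $e(I_Z)=-Z^2$, exactly as in the proof of Theorem \ref{Dao}. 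Finally, for anti-nef cycles one has $I_W\supseteq I_Z$ if and only if $W\le Z$, so the integrally closed ideals properly containing $I_Z$ are the $I_W$ with $W$ anti-nef and $0\le W\lneq Z$.

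With this dictionary the (strong) Rees property becomes an intersection inequality. For part $(1)$, suppose the integrally closed ideal $I=I_Z$ has SRP. Restricting SRP to the integrally closed overideals $I_W\supsetneq I_Z$ yields, for every anti-nef $W$ with $0\le W\lneq Z$,
\[
\mu(I_W)<\mu(I_Z)\quad\Longleftrightarrow\quad -MW<-MZ\quad\Longleftrightarrow\quad M(Z-W)<0 .
\]
Since $M$ is anti-nef and $Z-W\ge 0$, one always has $M(Z-W)\le 0$; thus the real content of SRP is that $Z$ cannot be decreased to a smaller anti-nef cycle along the exceptional curves $E_i$ with $ME_i=0$. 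I would then invoke the characterization of good ideals for rational singularities from \cite{GIW}: using negative-definiteness of the intersection form, the impossibility of such a decrease is exactly the cycle-theoretic condition defining a good ideal. Concretely, if some effective $D\ne 0$ supported on $\{E_i:ME_i=0\}$ with $Z-D$ anti-nef existed, then $D^2<0$ together with the anti-nef inequalities for $Z-D$ would contradict the SRP inequality, and conversely the absence of such $D$ is the good-ideal condition.

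For part $(2)$ I would construct the desired ideals directly. The anti-nef cycles form a full-dimensional rational cone, and the subcone $\mathcal{G}$ cut out by the extra equalities $ZE_i=0$ for all $i$ with $ME_i=0$ is the cone of (cycles of) good ideals by \cite{GIW}; it contains the ray $\mathbb{R}_{\ge 0}M$. When the minimal resolution has more than two exceptional curves this cone acquires dimension $\ge 2$, so $\mathcal{G}$ strictly contains the ray through $M$ and therefore contains an integral anti-nef cycle $Z$ that is not proportional to $M$. For such a $Z$ the ideal $I_Z$ is integrally closed with $I_Z\ne\m^n$ for all $n$, and it satisfies the stated property: given any integrally closed $I_W\supsetneq I_Z$, write $D=Z-W\ge 0$ nonzero with $W$ anti-nef; the relations $WE_i\le 0$ and $ZE_i=0$ on the curves orthogonal to $M$ force, via $D^2<0$, that $\operatorname{Supp}D$ must meet some $E_j$ with $ME_j<0$, whence $M(Z-W)<0$ and $\mu(I_W)<\mu(I_Z)$.

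I expect the main obstacle to be, in part $(1)$, the precise matching between the numerical consequence of SRP and the defining condition of a good ideal in \cite{GIW}: one must check curve-by-curve that ``no anti-nef decrease along $M$-orthogonal curves'' coincides with goodness, and the negative-definiteness of the restricted intersection matrix is the tool that makes both sides meet. In part $(2)$ the delicate points are purely combinatorial---ensuring that the rational cone $\mathcal{G}$ genuinely acquires an extra dimension once there are more than two exceptional curves, and that an integral, non-proportional representative can be chosen---after which the verification of the property is the same negative-definiteness computation as above.
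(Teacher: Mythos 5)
Your translation of the problem into intersection numbers on a resolution ($\mu(I_Z)=-MZ+1$, $e(I_Z)=-Z^2$, containment reversing the order on anti-nef cycles) is the same starting point as the paper's, but both halves of your argument then lean on a false identification. You claim that ``$Z$ admits no anti-nef decrease $Z-D$ with $D\neq 0$ supported on the curves $E_i$ with $ME_i=0$'' is \emph{exactly} the good-ideal condition, and in part $(2)$ that the cone $\mathcal{G}=\{Z: ZE_i=0 \text{ whenever } ME_i=0\}$ is the cone of good cycles. Neither is right. For a rational singularity, goodness of $I_Z$ means that $Z$ is defined on the \emph{minimal} resolution; it does not force $ZE_i=0$ on the $M$-orthogonal curves. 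A concrete counterexample to your equivalence in $(1)$: for $A_3$ (chain $E_1,E_2,E_3$ of $(-2)$-curves, $M=E_1+E_2+E_3$, $ME_2=0$), the cycle $Z=E_1+2E_2+E_3$ is anti-nef on the minimal resolution, so $I_Z$ is good, yet $Z-E_2=M$ is anti-nef and $M(Z-M)=ME_2=0$, so $\mu(I_Z)=\mu(\m)$ and $I_Z$ fails SRP. Thus ``good $\Rightarrow$ no decrease'' is false; only the converse direction (not good $\Rightarrow$ a decrease with $MD=0$ exists) holds, and that is the implication part $(1)$ actually needs --- but you never prove it. The paper's route is to pass to the minimal resolution $\pi\colon X\to X_0$: the minimal good ideal $I'\supseteq I$ has cycle $\pi_*Z$, and the projection formula gives $M_XZ=\pi^*M_0\cdot Z=M_0\cdot\pi_*Z$, whence $\mu(I')=\mu(I)$ and SRP fails for non-good $I$. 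That pushforward computation is the missing step.

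The gap in part $(2)$ is more serious. The cone $\mathcal{G}$ you propose to work in has dimension equal to the number of curves with $ME_j<0$, not the number of exceptional curves, and this can be $1$ no matter how many curves there are. For $D_4$ (central $(-2)$-curve $E_0$ meeting $E_1,E_2,E_3$, fundamental cycle $M=2E_0+E_1+E_2+E_3$) one has $ME_0=-1$ and $ME_i=0$ for $i=1,2,3$; the equations $ZE_i=0$ force $Z$ proportional to $M$, so $\mathcal{G}$ contains only the multiples of $M$ and your construction produces no ideal at all --- even though the proposition is true there: $Z=3E_0+2(E_1+E_2+E_3)$ is anti-nef, not a multiple of $M$, and any anti-nef $W\lneq Z$ must drop the $E_0$-coefficient (anti-nefness at $E_i$ forces the $E_i$-coefficients back up to $2$), so $-MW<-MZ$ and $\mu(I_W)<\mu(I_Z)$. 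So the correct construction must permit $ZE_i<0$ on some $M$-orthogonal curves; what has to be arranged is precisely the paper's stated condition that $M(Z'-Z)>0$ for every anti-nef $Z'\lneq Z$, not membership in $\mathcal{G}$. (To be fair, the paper itself only asserts that such $Z\neq nM$ can be constructed when there are more than two exceptional curves, without details; but your proposed construction is demonstrably insufficient.)
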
   

\begin{proof} If $A$ is a rational singularity, and if $I=I_Z$ 
 as in the proof of Theorem \ref{Dao}, we have shown 
$\mu(I) = - MZ +1$.  
\par
(1) If $I$ is not good and $I'$ is the minimal good ideal 
containing $I'$, we have $\mu(I) = \mu(I')$. Hence $I$ does not have SRP.
(2) If $I=I_Z$ has SRP, then $Z$ is defined on the minimal resolution of 
$\Spec(A)$ and the property \lq\lq $\mu(I') < \mu(I)$ for every integrally closed 
ideal $I'$ strictly containing $I$" is equivalent to say that \lq\lq for every cycle 
$Z' < Z$, $-MZ' < -MZ$ or %KW0721
$M(Z'-Z) >0$.  
If the minimal resolution of $\Spec(A)$
contains more than two curves, we can construct such 
$Z\ne nM$ with that property. 
\end{proof}

\begin{conj}  We believe that if $(A,\m)$ is a local ring of dimension 
$d \ge 2$ and if $\m^n$ are only ideals of $A$ which has the strong Rees property, then 
$\dim A=2$ and either $A$ is a regular local ring or $\hat{A} \cong 
k[[X^r,X^{r-1}Y, \ldots , Y^r]]$ for some $r$.
\end{conj}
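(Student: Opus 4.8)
The plan is to prove the contrapositive: assuming that $(A,\m)$ is neither a two-dimensional regular local ring nor (after completion) a Veronese ring $k[[X^r, X^{r-1}Y, \ldots, Y^r]]$, I would produce an $\m$-primary ideal $I \ne \m^n$ with the strong Rees property. Since the structural conclusion is phrased for $\widehat A$, I would first pass to the completion and record the comparison between SRP in $A$ and in $\widehat A$; one also has to show along the way that the hypothesis forces $A$ to be analytically normal, so that the resolution machinery of Section 5 becomes available. The first genuinely separate case is $d=\dim A \ge 3$, where I expect the extra directions to guarantee a non-power SRP ideal. Concretely I would try to adapt the socle construction behind Theorem \ref{point-SRP} and Example \ref{triple}: a nonzero class in $\Soc(H_{\M}^0(G/x^{*}G))$ produces an overideal $J \supsetneq \m^{\ell}$ with $\mu(J)=\mu(\m^{\ell})$, and in the complementary regime one hopes to pin down an ideal strictly between two powers for which $\mu$ drops along \emph{all} overideals.

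For $d=2$ I would reduce to the rational-singularity situation and use the dictionary of Section 5 together with Proposition \ref{Only}. Integrally closed $\m$-primary ideals correspond to anti-nef cycles $Z$ on the minimal resolution, with $\mu(I_Z)=-MZ+1$, where $\m\calO_X=\calO_X(-M)$. In these terms the strong Rees property of $I_Z$ \emph{among integrally closed ideals} becomes the purely numerical condition that $-MZ' < -MZ$ for every cycle $Z' < Z$, and Proposition \ref{Only}(2) already exhibits cycles $Z \ne nM$ satisfying this whenever the minimal resolution carries more than two exceptional curves. It then remains to treat the single-curve case: if the exceptional set is one smooth rational curve $E$ with $E^2=-r$, the cone $\widehat A$ is precisely $k[[X^r, X^{r-1}Y, \ldots, Y^r]]$ (with $r=1$ the regular case), every integrally closed ideal is a power of $\m$, and $G(\m)$ is Cohen-Macaulay, so by Corollary \ref{grCM} each $\m^n$ has SRP; here one must still verify that no non-integrally-closed ideal acquires SRP, confirming that $\m^n$ are the only such ideals.

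The hard part --- and the reason the statement is a conjecture rather than a theorem --- is the gap between the strong Rees property restricted to integrally closed ideals and the strong Rees property for \emph{all} ideals. Proposition \ref{Only}(2) only yields an integrally closed $I \ne \m^n$ for which $\mu$ drops strictly along integrally closed overideals; to contradict the hypothesis I must upgrade this to genuine SRP, i.e.\ control $\mu(J)$ for non-integrally-closed $J$ with $I \subsetneq J \subsetneq \overline{J}$. As Example \ref{semi-concrete} shows, passing to a smaller, non-integrally-closed ideal can \emph{raise} $\mu$, so the Rees property of $\overline{J}$ gives no upper bound on $\mu(J)$, and the upgrade is not formal. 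Closing this gap --- either by proving that the cycles produced in Proposition \ref{Only}(2) in fact have full SRP, or by building non-power SRP ideals directly from the combinatorics of the resolution graph whenever $A$ is neither regular nor Veronese --- is where the essential difficulty lies.

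A secondary obstacle is the passage beyond rational singularities: for a general two-dimensional normal $A$ the generator count acquires the correction term $c=p_g(A)-\ell_A(H^1(X,\calO_X(-M)))$ of Proposition \ref{Ineq_e(I)}, so the clean identity $\mu(I_Z)=-MZ+1$ becomes an inequality and the numerical translation of SRP is no longer exact. I would expect that handling this correction, together with the non-normal and higher-dimensional reductions, is what prevents a uniform argument; my working strategy would therefore be to settle the rational-singularity, single-versus-multiple-curve dichotomy first, and only then attempt to remove the rationality hypothesis.
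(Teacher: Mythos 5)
This statement is a \emph{conjecture} in the paper: the authors offer no proof of it, only the partial result Proposition \ref{Only}, which works for two-dimensional rational singularities and establishes a weaker conclusion (existence of integrally closed ideals $I\ne\m^n$ with $\mu(I')<\mu(I)$ for all \emph{integrally closed} overideals $I'$, when the minimal resolution has more than two exceptional curves). Your proposal is an honest and well-oriented research plan --- it correctly identifies the dictionary $\mu(I_Z)=-MZ+1$, the role of Proposition \ref{Only}(2), the single-curve case giving the Veronese ring, and, most importantly, the decisive obstruction: the gap between SRP restricted to integrally closed ideals and SRP against arbitrary overideals. But it is not a proof, and you say as much yourself. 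The two essential steps remain open: (i) upgrading the numerical condition ``$-MZ'<-MZ$ for all anti-nef $Z'<Z$'' to control of $\mu(J)$ for non-integrally-closed $J\supsetneq I_Z$ --- as you note, Example \ref{semi-concrete} shows $\mu$ can jump when passing to a non-integrally-closed ideal, so no formal argument closes this; and (ii) the cases $d\ge 3$, non-normal $A$, and non-rational two-dimensional singularities, where even the starting dictionary is unavailable or only an inequality (Proposition \ref{Ineq_e(I)}). Your claim that the hypothesis forces analytic normality is itself unproven, and the comparison of SRP between $A$ and $\widehat A$ is asserted without argument.

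In short: you have reconstructed, essentially correctly, the state of knowledge that led the authors to pose this as a conjecture rather than a theorem, but no part of your outline converts Proposition \ref{Only} into the full statement. If you want to make progress, the most tractable concrete target is the one you isolate: show that for a two-dimensional rational singularity the cycles $Z\ne nM$ produced in Proposition \ref{Only}(2) have genuine SRP, i.e.\ bound $\mu(J)$ for all $J$ with $I_Z\subsetneq J$, not just integrally closed ones. That alone would settle the rational case of the conjecture and would be a publishable result on its own.
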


\begin{ac}
The authors would like to thank Hailong Dao 
for giving us a motivation of this work. 
Moreover, they would like to thank Shiro Goto, J\"urgen Herzog, Ryo Takahashi, Junzo Watanabe and Santiago Zarzuela for giving us several valuable comments.  
\par 
The second author is supported by Grant-in-Aid for Scientific Research (C) 26400053.
The third author is supported by Grant-in-Aid for Scientific Research (C) 16K05110.
\end{ac}

%%%%%%%%%%%%%%

%%%%%%%%%%%%%%%%%%%%%%%%%%%%%%%%%%%%%%%%%%%

\end{document}